\documentclass[12pt,draft]{amsart}

\usepackage{a4,amssymb,cite}

\usepackage{graphicx,gastex}

\DeclareMathOperator{\Gr}{Gr}

\DeclareMathOperator{\lcm}{lcm}

\DeclareMathOperator{\Nil}{Nil}

\DeclareMathOperator{\var}{var}

\DeclareMathOperator{\ZR}{ZR}

\newtheorem{theorem}{Theorem}[section]

\newtheorem{proposition}[theorem]{Proposition}

\newtheorem{corollary}[theorem]{Corollary}

\newtheorem{lemma}[theorem]{Lemma}

\makeatletter

\renewcommand*\subjclass[2][2010]{\def\@subjclass{#2}\@ifundefined{subjclassname@#1}{\ClassWarning{\@classname}{Unknown edition (#1) of Mathematics Subject Classification; using '2010'.}}{\@xp\let\@xp\subjclassname\csname subjclassname@#1\endcsname}}

\renewcommand{\subjclassname}{\textup{2010} Mathematics Subject Classification}

\@addtoreset{equation}{section}

\makeatother

\begin{document}

\title[Upper-modular elements of the lattice of commutative varieties]{Upper-modular and related elements\\
of the lattice of commutative\\
semigroup varieties}

\thanks{Supported by the Ministry of Education and Science of the Russian Federation (project 2248), by grant of the President of the Russian Federation for supporting of leading scientific schools of the Russian Federation (project 5161.2014.1) and by Russian Foundation for Basic Research (grant 14-01-00524).}

\author{B.\,M.\,Vernikov}

\address{Ural Federal University, Institute of Mathematics and Computer Science, Lenina 51, 620000 Ekaterinburg, Russia}

\email{bvernikov@gmail.com}

\date{}

\begin{abstract}
We completely determine upper-modular, codistributive and costandard elements in the lattice of all commutative semigroup varieties. In particular, we prove that the properties of being upper-modular and codistributive elements in the mentioned lattice are equivalent. Moreover, in the nil-case the properties of being elements of all three types turn out to be equivalent.
\end{abstract}

\keywords{Semigroup, variety, lattice of varieties, upper-modular element, codistributive element, costandard element}

\subjclass{Primary 20M07, secondary 08B15}

\maketitle

\section{Introduction}
\label{section intr}

A remarkable attention in the theory of lattices is devoted to special elements of lattices. Recall definitions of several types of special elements. An element $x$ of the lattice $\langle L;\vee,\wedge\rangle$ is called
\begin{align*}
&\text{\emph{distributive} if}\quad&&\forall y,z\in L\colon\quad x\vee(y\wedge z)=(x\vee y)\wedge(x\vee z);\\
&\text{\emph{standard} if}\quad&&\forall y,z\in L\colon\quad(x\vee y)\wedge z=(x\wedge z)\vee(y\wedge z);\\
&\text{\emph{modular} if}\quad&&\forall y,z\in L\colon\quad y\le z\longrightarrow(x\vee y)\wedge z=(x\wedge z)\vee y;\\
&\text{\emph{upper-modular} if}\quad&&\forall y,z\in L\colon\quad y\le x\longrightarrow x\wedge(y\vee z)=y\vee(x\wedge z);
\end{align*}
\emph{neutral} if, for all $y,z\in L$, the elements $x$, $y$ and $z$ generate a distributive sublattice of $L$. \emph{Codistributive}, \emph{costandard} and \emph{lower-modular} elements are defined dually to distributive, standard and upper-modular elements respectively.

Special elements play an important role in the general lattice theory (see~\cite[Section~III.2]{Gratzer-11}, for instance). In particular, one can say that neutral elements are related with decompositions of a lattice into subdirect product of its intervals, while [co]distributive elements are connected with homomorphisms of a lattice into its principal filters [principal ideals]. Thus the knowledge of which elements of a lattice are neutral or [co]distributive gives essential information on the structure of the lattice as a whole.

There is a number of interrelations between the mentioned types of elements. It is evident that a neutral element is both standard and costandard; a [co]standard element is modular; a [co]distributive element is lower-modular [upper-modular]. It is well known also that a [co]standard element is [co]distributive (see~\cite[Theorem~253]{Gratzer-11}, for instance).

During last years, a number of articles appeared concerning special elements of the above mentioned types in the lattice \textbf{SEM} of all semigroup varieties and in certain its important sublattices, first of all, in the lattice \textbf{Com} of all commutative semigroup varieties. Briefly speaking, these articles contain complete descriptions of special elements of many types and essential information about elements of other types (including strong necessary conditions and descriptions in wide and important partial cases). These results are discussed in details in the recent survey~\cite{Vernikov-15+}. Special elements of the lattice \textbf{Com} are examined in the articles~\cite{Shaprynskii-11,Shaprynskii-12}. Results of these works give a complete description of neutral, standard, distributive or lower-modular elements of \textbf{Com} and a considerable information about its modular elements that reduces the problem of description of such elements to the nil-case. However, practically anything was unknown so far about costandard, codistributive or upper-modular elements of the lattice \textbf{Com}. A unique exclusion is a description of elements of these three types in the narrow and particular class of 0-reduced varieties that follows from~\cite[Proposition~2.3 and Theorem~1.2]{Shaprynskii-11}. In particular, it was unknown, whether the lattice \textbf{Com} contains costandard but not neutral elements, as well as upper-modular but not codistributive elements. Corresponding questions were formulated in~\cite{Vernikov-15+} (see Questions~4.11 and~4.12 there). For the sake of completeness, we mention that there exist codistributive but not costandard elements in the lattice \textbf{Com}. This fact can be easily deduced from results of~\cite{Shaprynskii-11} (see~\cite[Section~4.5]{Vernikov-15+}).

In this article, we completely determine costandard, codistributive or upper-modular elements in the lattice \textbf{Com}. In particular, we answer on Questions~4.11 and~4.12 of~\cite{Vernikov-15+}. Namely, we prove that, in this lattice, the properties of being upper-modular and codistributive elements are equivalent, but the properties of being costandard and neutral elements are not equivalent. Moreover, it turns out that all three properties we consider are equivalent for commutative nil-varieties. Note that these results extremely contrast with the situation in the lattice \textbf{SEM} where the properties of being upper-modular and codistributive elements are not equivalent (compare~\cite[Theorem~1.2]{Vernikov-08a} and~\cite[Theorem~1.2]{Vernikov-11}) but the properties of being costandard and neutral elements are equivalent~\cite[Theorem~1.3]{Vernikov-11}.

To formulate main results, we need some notation. We denote by $\mathcal T$, $\mathcal{SL}$ and $\mathcal{COM}$ the trivial variety, the variety of semilattices and the variety of all commutative semigroups respectively. If $\Sigma$ is a system of semigroup identities then $\var\Sigma$ stands for the semigroup variety given by $\Sigma$. For a natural number $m$, we put
$$\mathcal C_m=\var\{x^m=x^{m+1},\,xy=yx\}\ldotp$$
In particular, $\mathcal C_1=\mathcal{SL}$. For brevity, we put also $\mathcal C_0=\mathcal T$. Note that a semigroup $S$ satisfies the identity system $wx=xw=w$ where the letter $x$ does not occur in the word $w$ if and only if $S$ contains a zero element~0 and all values of $w$ in $S$ equal~0. We adopt the usual convention of writing $w=0$ as a short form of such a system. The main results of the article are the following two theorems.

\begin{theorem}
\label{umod&codistr}
For a commutative semigroup variety $\mathcal V$, the following are equivalent:
\begin{itemize}
\item[\textup{a)}]$\mathcal V$ is an upper-modular element of the lattice $\mathbf{Com}$;
\item[\textup{b)}]$\mathcal V$ is a codistributive element of the lattice $\mathbf{Com}$;
\item[\textup{c)}]one of the following holds:
\begin{itemize}
\item[\textup{(i)}]$\mathcal{V=COM}$,
\item[\textup{(ii)}]$\mathcal{V=M\vee N}$ where $\mathcal M$ is one of the varieties $\mathcal T$ or $\mathcal{SL}$, and $\mathcal N$ is a commutative variety satisfying the identities
\begin{align}
&\label{xxyz=0}x^2yz=0,\\
&\label{xxy=xyy}x^2y=xy^2,
\end{align}
\item[\textup{(iii)}]$\mathcal{V=G\vee M\vee N}$ where $\mathcal G$ is an Abelian periodic group variety, $\mathcal M$ is one of the varieties $\mathcal T$, $\mathcal{SL}$ or $\mathcal C_2$, and $\mathcal N$ is a commutative variety satisfying the identity
\begin{equation}
\label{xxy=0}
x^2y=0\ldotp
\end{equation}
\end{itemize}
\end{itemize}
\end{theorem}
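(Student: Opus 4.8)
The plan is to establish the cycle \textup{a)}$\Rightarrow$\textup{c)}$\Rightarrow$\textup{b)}$\Rightarrow$\textup{a)}. The last implication is immediate, since a codistributive element of any lattice is upper-modular (one of the general interrelations recalled in Section~\ref{section intr}). In both remaining implications one may assume $\mathcal V\ne\mathcal{COM}$: the variety $\mathcal{COM}$ is the top element of $\mathbf{Com}$ and hence a codistributive element, which is case~\textup{(i)}. Moreover $\mathcal V\ne\mathcal{COM}$ is automatically periodic, because a proper subvariety of $\mathcal{COM}$ satisfies some identity $u=v$ in which $u$ and $v$ have different content, and a suitable substitution of powers of one letter converts it into a non-trivial identity of the form $x^p=x^q$. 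So from now on $\mathcal V$ is periodic and $\mathcal V\ne\mathcal{COM}$; write $\mathcal N=\Nil\mathcal V$ and $\mathcal G=\Gr\mathcal V$ for the largest nil- and largest group subvarieties of $\mathcal V$.

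The heart of the argument is the nil-case, which I would settle first as a separate statement: \emph{a commutative nil-variety $\mathcal N$ is an upper-modular element of $\mathbf{Com}$ if and only if it is a codistributive element, and this happens exactly when $\mathcal N$ satisfies \eqref{xxyz=0} and \eqref{xxy=xyy}}. For the sufficiency one checks directly that such an $\mathcal N$ is codistributive, i.e. that $\mathcal X\mapsto\mathcal N\wedge\mathcal X$ preserves joins. Under \eqref{xxyz=0} and \eqref{xxy=xyy} the variety $\mathcal N$ is locally finite and very transparent (every word of length at least $4$ with a repeated letter equals $0$, and $x^2y=xy^2$), so the subvariety lattice of $\mathcal N$ and the value $\mathcal N\wedge\mathcal X$ for an arbitrary commutative variety $\mathcal X$ are controlled by finitely many "coordinates", and join-preservation is verified coordinate by coordinate; one may also lean here on the known description of the special elements among $0$-reduced commutative varieties from~\cite{Shaprynskii-11}. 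For the necessity one argues contrapositively: if a commutative nil-variety $\mathcal N$ fails \eqref{xxyz=0}, or satisfies \eqref{xxyz=0} but fails \eqref{xxy=xyy}, one exhibits a subvariety $\mathcal Y\le\mathcal N$ and a commutative variety $\mathcal Z$ with $\mathcal N\wedge(\mathcal Y\vee\mathcal Z)\ne\mathcal Y\vee(\mathcal N\wedge\mathcal Z)$, contradicting upper-modularity of $\mathcal N$. The two failure cases call for different test pairs of small commutative varieties; guessing them correctly and computing the resulting joins and meets of nil- and low-index varieties is the most delicate part of the whole proof.

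Granting the nil-statement, \textup{a)}$\Rightarrow$\textup{c)} proceeds as follows. One first shows that $\mathcal N=\Nil\mathcal V$ satisfies \eqref{xxyz=0} and \eqref{xxy=xyy}: for a nil subvariety $\mathcal Y\le\mathcal N$ and a nil variety $\mathcal Z$ one has $\mathcal V\wedge(\mathcal Y\vee\mathcal Z)=\mathcal N\wedge(\mathcal Y\vee\mathcal Z)$ and $\mathcal V\wedge\mathcal Z=\mathcal N\wedge\mathcal Z$ (intersecting a nil-variety with $\mathcal V$ gives the same result as intersecting it with $\Nil\mathcal V$), so the upper-modularity equation for $\mathcal V$ at such a pair becomes the codistributive equation for $\mathcal N$ at $(\mathcal Y,\mathcal Z)$; running the necessity argument of the nil-statement (which can be arranged to use test pairs of this form) then forces \eqref{xxyz=0} and \eqref{xxy=xyy} on $\mathcal N$. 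A first consequence is $\mathcal C_3\not\le\mathcal V$: otherwise $\Nil\mathcal V\ge\Nil\mathcal C_3=\var\{x^3=0,\,xy=yx\}$, which fails \eqref{xxyz=0}; by the structure theory of commutative periodic varieties this confines the "combinatorial monogenic part" of $\mathcal V$ to $\mathcal T$, $\mathcal{SL}$ or $\mathcal C_2$. Now one splits on whether $\mathcal N$ satisfies \eqref{xxy=0}. If it does not, test pairs in which $\mathcal Y$ involves a non-trivial group subvariety of $\mathcal G$ (respectively involves $\mathcal C_2$) together with a suitable nil variety $\mathcal Z$ are used to contradict upper-modularity, yielding $\mathcal G=\mathcal T$ and $\mathcal C_2\not\le\mathcal V$; hence $\mathcal V=\mathcal M\vee\mathcal N$ with $\mathcal M\in\{\mathcal T,\mathcal{SL}\}$, case~\textup{(ii)}. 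If $\mathcal N$ does satisfy \eqref{xxy=0}, no further restriction is forced, and the structure theory gives $\mathcal V=\mathcal G\vee\mathcal M\vee\mathcal N$ with $\mathcal G$ an arbitrary Abelian periodic group variety and $\mathcal M\in\{\mathcal T,\mathcal{SL},\mathcal C_2\}$, case~\textup{(iii)}.

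Finally, \textup{c)}$\Rightarrow$\textup{b)}: one verifies that every variety in \textup{(i)}--\textup{(iii)} is a codistributive element. Item~\textup{(i)} is the top element. For \textup{(ii)} and \textup{(iii)} the restrictions on $\mathcal G$, $\mathcal M$ and $\mathcal N$ (notably \eqref{xxyz=0}--\eqref{xxy=xyy}, respectively \eqref{xxy=0}) ensure that meeting by an arbitrary commutative variety respects the decomposition, i.e. $\mathcal V\wedge\mathcal X=(\mathcal G\wedge\mathcal X)\vee(\mathcal M\wedge\mathcal X)\vee(\mathcal N\wedge\mathcal X)$; granting this, join-preservation of $\mathcal X\mapsto\mathcal V\wedge\mathcal X$ reduces to the same property for an Abelian periodic group variety (elementary: each non-trivial cyclic $p$-group quotient of a group lying in $\mathcal X\vee\mathcal Y$ already lies in $\mathcal X$ or in $\mathcal Y$), for $\mathcal{SL}$ and for $\mathcal C_2$ (direct computation), and for $\mathcal N$ (the nil-statement). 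I expect the main obstacle to be the necessity half of the nil-statement, and with it the analogous test-pair constructions used in \textup{a)}$\Rightarrow$\textup{c)} to force \eqref{xxy=0}, $\mathcal G=\mathcal T$ and $\mathcal C_2\not\le\mathcal V$: in each one must find the right pair of small commutative varieties witnessing the failure and then carry out the precise lattice computation; by contrast \textup{c)}$\Rightarrow$\textup{b)}, though computational, is comparatively routine once the displayed decomposition of $\mathcal V\wedge\mathcal X$ is available.
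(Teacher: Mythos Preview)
Your overall cycle a)$\Rightarrow$c)$\Rightarrow$b)$\Rightarrow$a) is the paper's as well, and your reduction to the periodic case is fine. For a)$\Rightarrow$c), however, the paper does not build the argument from scratch: it simply observes that the necessary-condition proof already carried out in~\cite{Vernikov-08a} for the lattice $\mathbf{SEM}$ transfers verbatim to $\mathbf{Com}$ once the auxiliary group variety used there is taken to be Abelian. Your contrapositive test-pair scheme is the same idea in spirit, but you would be reconstructing the machinery of~\cite{Vernikov-08a} rather than citing it.

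For c)$\Rightarrow$b) the paper's route differs substantially from yours. In case~(ii) it proves the \emph{stronger} fact that $\mathcal V$ is costandard (this is Theorem~\ref{costand}); the argument fixes an identity $u=v$ that holds in $(\mathcal N\wedge\mathcal X)\vee\mathcal Y$, takes a shortest derivation of it from the identities of $\mathcal N$ and of $\mathcal X$, and case-splits on whether $u$ is a linear word, a power $x^n$, or $x^2y$. The last case is the long one: it requires a complete picture of $L(\mathcal I)$ for $\mathcal I=\var\{x^2y=xy^2,\,x^2yz=0,\,xy=yx\}$ and two auxiliary lemmas controlling $\Nil(\mathcal G\vee\mathcal C_m\vee\mathcal U)$ and $\ZR(\mathcal U_1)\wedge\mathcal U_2$. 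Your ``coordinate-by-coordinate'' check of codistributivity of such an $\mathcal N$ is thus precisely the place where the paper spends most of its effort; the sketch you give does not yet indicate how that check goes. In case~(iii) the paper does \emph{not} use your decomposition $\mathcal V\wedge\mathcal X=(\mathcal G\wedge\mathcal X)\vee(\mathcal M\wedge\mathcal X)\vee(\mathcal N\wedge\mathcal X)$ at all: instead it writes both $\mathcal Z_1=\mathcal V\wedge(\mathcal X\vee\mathcal Y)$ and $\mathcal Z_2=(\mathcal V\wedge\mathcal X)\vee(\mathcal V\wedge\mathcal Y)$ in the canonical form of Lemma~\ref{decomposition} and compares the invariants $\Gr(\,\cdot\,)$, $m(\,\cdot\,)$, $\deg(\,\cdot\,)$ and the validity of $x^2=0$ directly, using distributivity of the $\gcd/\lcm$ lattice and of $(\min,\max)$ on $\mathbb N$.

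Your decomposition-plus-components strategy is a genuine gap as stated. The identity $(\mathcal A\vee\mathcal B)\wedge\mathcal X=(\mathcal A\wedge\mathcal X)\vee(\mathcal B\wedge\mathcal X)$ is the definition of $\mathcal A$ (or $\mathcal B$) being \emph{standard}; neutrality of $\mathcal{SL}$ and of any $\mathcal N$ with $x^2y=0$ (Proposition~\ref{neutr}) gives it for those summands, but $\mathcal C_2$ is not a neutral element of $\mathbf{Com}$, so when $\mathcal M=\mathcal C_2$ the step ``meeting respects the decomposition'' is unjustified. You would either have to prove that particular standardness identity for $\mathcal G\vee\mathcal C_2$ separately (which is not obviously easier than the whole problem), or switch to the paper's invariant-comparison method. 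Similarly, your reduction of codistributivity of an Abelian periodic $\mathcal G$ to a statement about cyclic $p$-group quotients is a statement inside the lattice of group varieties; one still has to argue why meets and joins with arbitrary commutative \emph{semigroup} varieties respect it, which is exactly what the paper's $\Gr(\,\cdot\,)$ computation via Lemma~\ref{Gr of join} accomplishes.
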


\begin{theorem}
\label{costand}
For a commutative semigroup variety $\mathcal V$, the following are equivalent:
\begin{itemize}
\item[\textup{a)}]$\mathcal V$ is a modular and upper-modular element of the lattice $\mathbf{Com}$;
\item[\textup{b)}]$\mathcal V$ is a costandard element of the lattice $\mathbf{Com}$;
\item[\textup{c)}]one of the claims~\textup{(i)} or~\textup{(ii)} of Theorem~\textup{\ref{umod&codistr}} holds.
\end{itemize}
\end{theorem}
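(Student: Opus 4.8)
The plan is to prove the cycle b)~$\Rightarrow$~a)~$\Rightarrow$~c)~$\Rightarrow$~b), with Theorem~\ref{umod&codistr} as the main tool. The implication b)~$\Rightarrow$~a) is pure lattice theory: a costandard element is modular and, by the dual of \cite[Theorem~253]{Gratzer-11}, codistributive, while every codistributive element is upper-modular, as recalled in Section~\ref{section intr}. So the content lies in the two remaining implications.

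For a)~$\Rightarrow$~c): let $\mathcal V$ be an upper-modular element. By Theorem~\ref{umod&codistr}, $\mathcal V$ falls under (i), (ii) or (iii); assume (iii) holds but neither (i) nor (ii). Then either the Abelian periodic group variety $\mathcal G$ in~(iii) is non-trivial, or $\mathcal M=\mathcal C_2$. (If $\mathcal G=\mathcal T$ and $\mathcal M\in\{\mathcal T,\mathcal{SL}\}$ then, since~\eqref{xxy=0} implies~\eqref{xxyz=0} and~\eqref{xxy=xyy}, case~(iii) already reduces to~(ii); and a variety of the form $\mathcal C_2\vee\mathcal N$ cannot be of the form~(ii), since it contains $\mathcal C_2$, which violates~\eqref{xxy=xyy}, whereas every variety of type~(ii) satisfies~\eqref{xxy=xyy}.) In each of these two remaining situations I would exhibit commutative varieties $\mathcal X\le\mathcal Z$ for which the modular law fails at $\mathcal V$, that is, $(\mathcal V\vee\mathcal X)\wedge\mathcal Z\ne(\mathcal V\wedge\mathcal Z)\vee\mathcal X$; since the inequality $\ge$ here is automatic, one has to force a strict inequality the other way. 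Natural candidates for the witnesses are built from $\mathcal C_2$, $\mathcal{SL}$, the varieties of Abelian groups of prime exponent and the smallest commutative nil-varieties, and the computation rests on the usual formulas for joins and meets of such varieties in $\mathbf{Com}$ together with the information on modular elements of $\mathbf{Com}$ gathered in \cite{Shaprynskii-11,Shaprynskii-12}. This excludes the genuine type-(iii) varieties and leaves exactly~(i) or~(ii).

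For c)~$\Rightarrow$~b): by Theorem~\ref{umod&codistr} every variety of type~(i) or~(ii) is codistributive, so the task is to upgrade this to costandardity. If $\mathcal{V=COM}$ it is immediate, the greatest element of any lattice being costandard (indeed neutral). For $\mathcal V=\mathcal M\vee\mathcal N$ of type~(ii) I would revisit the proof of the implication c)~$\Rightarrow$~b) of Theorem~\ref{umod&codistr} and observe that, in cases~(i) and~(ii), the argument used there actually establishes the full costandard identity
\[(\mathcal V\wedge\mathcal X)\vee\mathcal Y=(\mathcal V\vee\mathcal Y)\wedge(\mathcal X\vee\mathcal Y)\qquad(\mathcal X,\mathcal Y\in\mathbf{Com}).\]
As the inequality $\le$ holds in any lattice, what remains is to show that every identity satisfied simultaneously in $\mathcal V\wedge\mathcal X$ and in $\mathcal Y$ is also satisfied in $(\mathcal V\vee\mathcal Y)\wedge(\mathcal X\vee\mathcal Y)$. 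I would treat this by cases on the structure of $\mathcal X$ and $\mathcal Y$ (a nil-variety; containing $\mathcal{SL}$; containing $\mathcal C_2$; containing a non-trivial group variety), using that $\mathcal{SL}$ is a neutral element of $\mathbf{Com}$ \cite{Shaprynskii-11}, computing the relevant joins and meets from the known description of $\mathbf{Com}$, and reducing the resulting identities on words — with the help of~\eqref{xxyz=0} and~\eqref{xxy=xyy} — to ones that are manifestly true.

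The bulk of the work, and the step I expect to be the main obstacle, is this last verification that the type-(ii) varieties are costandard: codistributivity from Theorem~\ref{umod&codistr} does not by itself give costandardity, and the upgrade forces one through a case analysis over all pairs $\mathcal X,\mathcal Y\in\mathbf{Com}$ with the attendant word combinatorics modulo~\eqref{xxyz=0} and~\eqref{xxy=xyy}. Ruling out the genuine type-(iii) varieties in a)~$\Rightarrow$~c) is shorter but requires a careful choice of the witnessing pair $\mathcal X\le\mathcal Z$, since modularity of a lattice element is not a monotone condition.
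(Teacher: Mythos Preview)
Your plan has the logical dependency between the two theorems reversed, and this makes the c)\,$\Rightarrow$\,b) step circular. In the paper, the implication c)\,$\Rightarrow$\,b) of Theorem~\ref{costand} is proved \emph{first}, by a direct argument, and only afterwards is c)\,$\Rightarrow$\,b) of Theorem~\ref{umod&codistr} derived; for a variety of type~(ii) the latter proof consists of the single line ``apply Theorem~\ref{costand}''. Hence ``revisiting the proof of c)\,$\Rightarrow$\,b) of Theorem~\ref{umod&codistr}'' to extract costandardity for type-(ii) varieties yields nothing: there is no independent codistributivity argument for case~(ii) to upgrade. The real content of Theorem~\ref{costand} is precisely the verification that a nil-variety $\mathcal N$ satisfying~\eqref{xxyz=0} and~\eqref{xxy=xyy} is costandard, and this has to be done from scratch. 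The paper's argument is not a case split on the shape of $\mathcal X$ and $\mathcal Y$ as you suggest, but on the shape of an identity $u=v$ holding in $(\mathcal N\wedge\mathcal X)\vee\mathcal Y$: since $\mathcal N$ satisfies~\eqref{xxyz=0}, one may assume $u$ is linear, a power $x^n$, or $x^2y$, and a shortest deduction of $u=v$ from the identities of $\mathcal N$ and of $\mathcal X$ is analysed in each case. The delicate case $u\equiv x^2y$ requires decomposing $\mathcal X,\mathcal Y$ via Lemma~\ref{decomposition}, computing $\Gr$, the invariant $m(\cdot)$, and $\Nil$ of both sides separately, and ultimately using the explicit (distributive) structure of the subvariety lattice of $\mathcal I=\var\{x^2y=xy^2,\,x^2yz=0,\,xy=yx\}$.

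For a)\,$\Rightarrow$\,c) your witness-construction plan is workable in principle but unnecessarily laborious. The paper bypasses it entirely: Proposition~\ref{mod} (from~\cite{Shaprynskii-12}) says that any modular element of $\mathbf{Com}$ already has the form $\mathcal M\vee\mathcal N$ with $\mathcal M\in\{\mathcal T,\mathcal{SL}\}$ and $\mathcal N$ a nil-variety. This single citation kills all genuine type-(iii) varieties at once; then Lemma~\ref{join with SL} passes upper-modularity from $\mathcal V$ to $\mathcal N$, and the already-established a)\,$\Rightarrow$\,c) of Theorem~\ref{umod&codistr} forces $\mathcal N$ to satisfy~\eqref{xxyz=0} and~\eqref{xxy=xyy}. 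No ad hoc counterexamples are needed.
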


The article consists of four sections. In Section~\ref{aux} we collect auxiliary results used in what follows, Section~\ref{proof} is devoted to the proof of Theorems~\ref{umod&codistr} and~\ref{costand}, and Section~\ref{corollary} contains several corollaries from the main results.

\section{Preliminaries}
\label{aux}

We start with certain results about special elements in the lattice \textbf{Com} obtained earlier.

\begin{proposition}[\!\!{\cite[Theorem~1.2]{Shaprynskii-11}}]
\label{neutr}
A commutative semigroup variety $\mathcal V$ is a neutral element of the lattice $\mathbf{Com}$ if and only if $\mathcal{V=M\vee N}$ where $\mathcal M$ is one of the varieties $\mathcal T$ or $\mathcal{SL}$, and $\mathcal N$ is a commutative variety satisfying the identity~\eqref{xxy=0}.\qed
\end{proposition}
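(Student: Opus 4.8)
The plan is to prove the two implications separately, using the classical description of neutral elements: in any lattice $L$, an element $x$ is neutral if and only if the map $y\mapsto(y\wedge x,\,y\vee x)$ is an embedding of $L$ into the direct product of the principal ideal $(x]$ and the principal filter $[x)$; equivalently, $x$ is at once distributive, codistributive, and separating (meaning: $y\wedge x=z\wedge x$ and $y\vee x=z\vee x$ together force $y=z$). Throughout one works with identity bases of commutative varieties and with the standard coordinatization of a commutative variety by its semilattice part (whether or not it contains $\mathcal{SL}$) and its nil-part.

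For the sufficiency direction, let $\mathcal V=\mathcal M\vee\mathcal N$ with $\mathcal M\in\{\mathcal T,\mathcal{SL}\}$ and $\mathcal N$ satisfying $x^2y=0$. The first step is to write down transparent identity bases for $\mathcal U\wedge\mathcal V$ and $\mathcal U\vee\mathcal V$, for an arbitrary commutative variety $\mathcal U$. The identity $x^2y=0$ confines $\mathcal N$ to a very rigid region of $\mathbf{Com}$ --- every word of length at least $3$ containing a repeated letter equals zero, the only surviving words being the multilinear ones and the squares --- and using this one shows that meets and joins with $\mathcal V$ are computed separately in the semilattice coordinate and in the nil coordinate. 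From these formulas one reads off directly that the assignment $\mathcal U\mapsto(\mathcal U\wedge\mathcal V,\,\mathcal U\vee\mathcal V)$ is a lattice homomorphism, that is, that $\mathcal V$ is distributive and codistributive; its injectivity is where one genuinely needs $\mathcal M$ to be only $\mathcal T$ or $\mathcal{SL}$, and not, say, $\mathcal C_2$: the datum recording whether a variety contains $\mathcal{SL}$ takes only two values, so nothing is lost in passing to the pair $(\mathcal U\wedge\mathcal V,\,\mathcal U\vee\mathcal V)$ and $\mathcal U$ can be reconstructed from it.

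For the necessity direction, suppose $\mathcal V$ is neutral. One rules out, one configuration at a time, every case not appearing in the conclusion, always by producing commutative varieties $\mathcal X$ and $\mathcal Y$ for which $\{\mathcal V,\mathcal X,\mathcal Y\}$ generates a non-distributive sublattice of $\mathbf{Com}$ (one containing a copy of $M_3$ or of $N_5$). Concretely: if $\mathcal V$ contains a nontrivial group, hence the variety of Abelian groups of some prime exponent, one plays that group variety off against a suitably chosen group or group-plus-nil variety; if the semilattice part of $\mathcal V$ strictly exceeds $\mathcal{SL}$, so that $\mathcal C_m\subseteq\mathcal V$ for some $m\ge 2$, one builds a diamond from $\mathcal{SL}$, a suitable small nil-variety, and their join, together with $\mathcal V$ itself; and if the nil-part of $\mathcal V$ fails $x^2y=0$, one uses small varieties built from the words witnessing that failure. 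One also uses that a neutral element is in particular standard and lower-modular, together with the known descriptions of such elements in~\cite{Shaprynskii-11,Shaprynskii-12}, to shorten the case analysis. The main obstacle is exactly this systematic manufacture of the witnesses $\mathcal X$ and $\mathcal Y$: it calls for careful bookkeeping with identity bases and with joins and meets of commutative varieties, and it is here that essentially all the preliminary material of Section~\ref{aux} is used. By contrast, the sufficiency direction is comparatively routine once the two coordinatewise formulas for $\wedge$ and $\vee$ with $\mathcal V$ are in hand.
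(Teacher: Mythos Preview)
The paper does not prove this proposition at all: it is quoted verbatim from~\cite[Theorem~1.2]{Shaprynskii-11} and marked with a terminal \qed\ as a black-box citation. So there is no ``paper's own proof'' to compare your proposal against; any assessment has to be of your sketch on its own merits, or against Shaprynski\v{\i}'s original argument, which is not reproduced here.

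Your outline is a plausible high-level strategy, but as written it is only a plan, not a proof. A few concrete points. For sufficiency, the claim that meets and joins with $\mathcal V$ split coordinatewise into a semilattice part and a nil-part is exactly where the work lies, and you have not indicated how you would establish it; in particular, for an arbitrary commutative $\mathcal U$ (possibly $\mathcal{COM}$ itself, or containing nontrivial groups, or with $m(\mathcal U)\ge 2$) you would need to show that $\Nil(\mathcal U\vee\mathcal N)$ is controlled by $\Nil(\mathcal U)$ and $\mathcal N$, which is not automatic. The injectivity (``separating'') step is also asserted rather than argued. For necessity, your reference to ``the known descriptions of such elements in~\cite{Shaprynskii-11,Shaprynskii-12}'' is circular: the result you are trying to prove \emph{is} Theorem~1.2 of~\cite{Shaprynskii-11}, so you cannot invoke that paper's classifications of standard or distributive elements without checking that those proofs are logically prior to the statement at hand. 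Finally, your suggestion to build an $M_3$ from $\mathcal{SL}$, a small nil-variety, and their join when $\mathcal C_2\subseteq\mathcal V$ cannot work as stated, since by Lemma~\ref{dir prod} the sublattice generated by $\mathcal{SL}$ and any nil-variety is distributive; you would need a different triple of witnesses there.
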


\begin{proposition}[\!\!{\cite[Theorem~1.4]{Shaprynskii-12}}]
\label{mod}
If a commutative semigroup variety $\mathcal V$ is a modular element of the lattice $\mathbf{Com}$ then $\mathcal{V=M\vee N}$ where $\mathcal M$ is one of the varieties $\mathcal T$ or $\mathcal{SL}$, and $\mathcal N$ is a nil-variety.\qed
\end{proposition}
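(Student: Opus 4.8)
The plan is to establish the contrapositive, relying on the classical structure theory of commutative semigroup varieties. (The top element $\mathcal{COM}$ is neutral and hence modular, so one reads the assertion with $\mathcal V\ne\mathcal{COM}$ understood --- equivalently, adds the alternative $\mathcal V=\mathcal{COM}$ to the conclusion.) The structural fact I would invoke is that a commutative semigroup variety has the form $\mathcal M\vee\mathcal N$ with $\mathcal M\in\{\mathcal T,\mathcal{SL}\}$ and $\mathcal N$ a nil-variety if and only if it is periodic, contains no nontrivial group, and does not contain the variety $\mathcal C_2$. Indeed, a periodic group-free commutative variety not containing $\mathcal{SL}$ is already a nil-variety (hence of the form with $\mathcal M=\mathcal T$); one containing $\mathcal{SL}$ but not $\mathcal C_2$ coincides with the join of $\mathcal{SL}$ and its largest nil subvariety; and $\mathcal C_m\supseteq\mathcal C_2$ for $m\ge2$ yields the converse. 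Consequently, a commutative variety $\mathcal V\ne\mathcal{COM}$ that is not of the required form falls into at least one of the cases: (i) $\mathcal V$ is not periodic; (ii) $\mathcal V$ contains a nontrivial Abelian periodic group variety; (iii) $\mathcal V$ is periodic and group-free and $\mathcal C_2\le\mathcal V$. It then remains to refute modularity of $\mathcal V$ in each of these three cases.

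For this I would use the sufficient condition: if there are commutative varieties $\mathcal A\subsetneq\mathcal B$ with $\mathcal B\le\mathcal V\vee\mathcal A$ while $\mathcal V\wedge\mathcal B\le\mathcal A$, then $(\mathcal V\vee\mathcal A)\wedge\mathcal B=\mathcal B\ne\mathcal A=(\mathcal V\wedge\mathcal B)\vee\mathcal A$, so $\mathcal V$ is not a modular element --- indeed $\mathcal V$, $\mathcal A$, $\mathcal B$, $\mathcal V\wedge\mathcal B$, $\mathcal V\vee\mathcal A$ then span a pentagon with $\mathcal V$ at the offending node. In each of the three cases $\mathcal V$ contains a ``small'' culprit variety: the least non-periodic commutative variety (the one generated by the additive semigroup of natural numbers) in case (i), a nontrivial Abelian periodic group variety in case (ii), and $\mathcal C_2$ in case (iii). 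Since the culprit lies \emph{inside} $\mathcal V$, it cannot itself serve as $\mathcal B$ (then $\mathcal V\wedge\mathcal B$ would contain it and be too large); one must instead find a $\mathcal B$ that omits the culprit yet still detects a ``mixed'' consequence of the join of the culprit with a suitably chosen small variety $\mathcal A$ with $\mathcal A\nleq\mathcal V$ --- in the non-periodic case this is where the hypothesis $\mathcal V\ne\mathcal{COM}$ enters, through a nontrivial identity satisfied by $\mathcal V$. The varieties $\mathcal A,\mathcal B$ would be assembled from $\mathcal{SL}$, the zero-multiplication variety $\var\{xy=yx,\ xy=0\}$, the atoms of $\mathbf{Com}$, small nil-varieties, and the varieties $\mathcal C_m$, and the requisite joins and meets would be computed from known descriptions of the lower part of the lattice $\mathbf{Com}$.

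I expect the main obstacle to be precisely the construction of these pentagons in cases (ii) and (iii): since the obstructing subvariety lies inside $\mathcal V$, the violation of the modular law cannot simply be read off, and one needs genuine control over how the join of a group variety (respectively of $\mathcal C_2$) with a small nil-variety sits inside $\mathbf{Com}$ --- in particular, over the existence of a subvariety of such a join lying strictly above the nil part but omitting the group (respectively omitting $\mathcal C_2$). Once such pairs $\mathcal A\subsetneq\mathcal B$ are produced uniformly for all relevant $\mathcal V$, and the non-periodic case (i) is disposed of with due care about the exception $\mathcal V=\mathcal{COM}$, the proposition follows immediately from the structural dichotomy recalled in the first paragraph.
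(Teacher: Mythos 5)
Your first paragraph (the reduction) is fine: by Lemma~\ref{decomposition} every commutative variety other than $\mathcal{COM}$ is periodic and decomposes as $\mathcal G\vee\mathcal C_m\vee\mathcal N$, so being of the form $\mathcal M\vee\mathcal N$ with $\mathcal M\in\{\mathcal T,\mathcal{SL}\}$ and $\mathcal N$ nil is indeed equivalent to $\Gr(\mathcal V)=\mathcal T$ and $\mathcal C_2\nsubseteq\mathcal V$, and your pentagon criterion ($\mathcal A\subsetneq\mathcal B$, $\mathcal B\subseteq\mathcal V\vee\mathcal A$, $\mathcal V\wedge\mathcal B\subseteq\mathcal A$) does refute modularity. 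But everything after that is a statement of intent, not a proof: the whole mathematical content of the proposition lies precisely in the step you leave open, namely producing such pairs $\mathcal A\subsetneq\mathcal B$ for every $\mathcal V$ containing a nontrivial (Abelian) group variety, and for every combinatorial $\mathcal V$ containing $\mathcal C_2$. You acknowledge this yourself (``once such pairs are produced \dots the proposition follows''), so nothing has actually been established. Moreover this step is genuinely delicate, not a routine verification: the natural candidates --- joins of the culprit with small nil-varieties built from $\mathcal{SL}$, the zero-multiplication variety and the atoms --- provably yield no new nilsemigroups (compare Lemma~\ref{Nil(G+C_m+U)}b) of this paper: $\Nil(\mathcal G\vee\mathcal C_m\vee\mathcal U)=\mathcal U$ for $\mathcal U\subseteq\mathcal K$), so the ``mixed consequence'' you hope to detect simply is not there for the obvious choices of $\mathcal A,\mathcal B$, and the known argument requires specifically tailored varieties and identity computations. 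Note also that the paper itself offers no proof to compare with: Proposition~\ref{mod} is imported verbatim from \cite[Theorem~1.4]{Shaprynskii-12}, so the burden on a self-contained attempt is exactly the construction you have postponed.

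Two smaller points. First, your case (i) is both vacuous and mishandled: by Lemma~\ref{decomposition} every commutative variety distinct from $\mathcal{COM}$ is already periodic, and the variety generated by the additive semigroup $(\mathbb N,+)$ is $\mathcal{COM}$ itself (its identities are exactly the consequences of the commutative law), so there is no proper ``least non-periodic commutative variety'' to serve as a culprit inside $\mathcal V$; the correct trichotomy is simply: $\mathcal V\ne\mathcal{COM}$ is periodic, and either $\Gr(\mathcal V)\ne\mathcal T$ or $\mathcal C_2\subseteq\mathcal V$. This slip is harmless only because the case never occurs. Second, your convention of reading the statement with $\mathcal V\ne\mathcal{COM}$ understood is acceptable and matches how the paper applies Proposition~\ref{mod} (only under that hypothesis), so that is not the issue; the issue is the missing core construction.
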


It is generally known that the variety $\mathcal{SL}$ is an atom of the lattice \textbf{SEM} (and therefore, of the lattice \textbf{Com}). Proposition~\ref{neutr} implies that this variety is neutral in \textbf{Com}. Combining these facts with~\cite[Corollary~2.1]{Shaprynskii-11}, we have the following

\begin{lemma}
\label{join with SL}
A commutative semigroup variety $\mathcal V$ is an upper-modular \textup[costandard\textup] element of the lattice $\mathbf{Com}$ if and only if the variety $\mathcal{V\vee SL}$ has the same property.\qed
\end{lemma}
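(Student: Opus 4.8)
The plan is to reduce at once to the case $\mathcal{SL}\not\subseteq\mathcal V$, since otherwise $\mathcal{V\vee SL}=\mathcal V$ and the assertion is vacuous. In that case $\mathcal{SL}$, being an atom of $\mathbf{Com}$, forces $\mathcal V\wedge\mathcal{SL}=\mathcal T$, and the principal ideal $(\mathcal{SL}]$ is the two-element chain $\{\mathcal T,\mathcal{SL}\}$. Using that $\mathcal{SL}$ is neutral (Proposition~\ref{neutr}), I would then invoke two standard facts about neutral elements (cf.~\cite[Corollary~2.1]{Shaprynskii-11}): the map $\varphi\colon\mathcal U\mapsto\mathcal U\vee\mathcal{SL}$ is a surjective lattice homomorphism of $\mathbf{Com}$ onto the principal filter $[\mathcal{SL})$, and the map $\mathcal U\mapsto(\mathcal U\wedge\mathcal{SL},\,\mathcal U\vee\mathcal{SL})$ is an embedding of $\mathbf{Com}$, as a sublattice, into the direct product $(\mathcal{SL}]\times[\mathcal{SL})$.

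The argument then rests on two elementary observations about upper-modular and costandard elements. \emph{First}, these properties are preserved by surjective lattice homomorphisms: if $\psi$ is such a homomorphism and $x$ is upper-modular, then for any $a\subseteq\psi(x)$ and any $b$ in the image one lifts $a,b$ to $y,z$, replaces $y$ by $y\wedge x$ (still a preimage of $a$, now lying below $x$), applies the defining identity of $x$ at $(y\wedge x,z)$, and pushes it through $\psi$; the costandard case is the same one-line computation. \emph{Second}, in a direct product $C\times L$ whose first factor $C$ is a chain, an element $(c,w)$ is upper-modular [costandard] if and only if $w$ has the corresponding property in $L$, because in the defining identities the two coordinates separate and the $C$-coordinate instance holds automatically in the chain $C$. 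Finally, both properties are trivially inherited by a sublattice from the ambient lattice, being defined by (quasi-)identities.

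Granting this, the lemma follows formally. If $\mathcal V$ is upper-modular [costandard] in $\mathbf{Com}$, the first observation applied to $\varphi$ shows that $\mathcal V\vee\mathcal{SL}=\varphi(\mathcal V)$ has the same property in $[\mathcal{SL})$; since the image of $\mathcal V\vee\mathcal{SL}$ under the embedding is $(\mathcal{SL},\,\mathcal V\vee\mathcal{SL})$ and $(\mathcal{SL}]$ is a chain, the second observation together with sublattice inheritance yields that $\mathcal V\vee\mathcal{SL}$ is upper-modular [costandard] in $\mathbf{Com}$. Conversely, if $\mathcal V\vee\mathcal{SL}$ is upper-modular [costandard] in $\mathbf{Com}$, it is so in the sublattice $[\mathcal{SL})$; then the image $(\mathcal V\wedge\mathcal{SL},\,\mathcal V\vee\mathcal{SL})=(\mathcal T,\,\mathcal V\vee\mathcal{SL})$ of $\mathcal V$ is upper-modular [costandard] in $(\mathcal{SL}]\times[\mathcal{SL})$ by the second observation, hence so is $\mathcal V$ in the sublattice $\mathbf{Com}$.

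The only real content is the verification of the two observations for upper-modular and costandard elements specifically (the analogues for modular or distributive elements are classical), and I do not expect a genuine obstacle there — only the need to be careful that the lift in the homomorphism step can be taken below $x$ exactly because $a\subseteq\psi(x)$, and that in the product step it is the chain factor that is unrestricted, the factor $(\mathcal{SL}]$ being a chain precisely because $\mathcal{SL}$ is an atom. One can also avoid this machinery and compute directly: write $\mathcal Y=(\mathcal Y\wedge\mathcal V)\vee(\mathcal Y\wedge\mathcal{SL})$ by neutrality of $\mathcal{SL}$ and distribute $\mathcal{SL}$ through the relevant joins; there the recurring nuisance — the ``hard part'' of a hands-on proof — is a spurious summand $\mathcal{SL}$ surfacing on the larger side of the desired inequality, which is removed by intersecting with $\mathcal V$ and using $\mathcal V\wedge\mathcal{SL}=\mathcal T$.
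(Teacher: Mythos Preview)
Your proposal is correct and follows essentially the same route as the paper: the paper simply records that $\mathcal{SL}$ is a neutral atom of $\mathbf{Com}$ and then invokes \cite[Corollary~2.1]{Shaprynskii-11} as a black box, whereas you unpack that corollary --- the surjection $\mathcal U\mapsto\mathcal U\vee\mathcal{SL}$ onto $[\mathcal{SL})$ and the embedding of $\mathbf{Com}$ into $(\mathcal{SL}]\times[\mathcal{SL})$ --- and verify by hand that upper-modularity and costandardness pass through these maps. The content is the same; you have just written out what the paper cites.
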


The subvariety lattice of a variety $\mathcal X$ is denoted by $L(\mathcal X)$. The following lemma is generally known.

\begin{lemma}
\label{dir prod}
If $\mathcal V$ is a semigroup variety with $\mathcal{V\nsupseteq SL}$ then the lattice $L(\mathcal{V\vee SL})$ is isomorphic to the direct product of the lattices $L(\mathcal V)$ and $L(\mathcal{SL})$.\qed
\end{lemma}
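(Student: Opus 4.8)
The plan is to write down the isomorphism explicitly. Since $\mathcal{SL}$ is an atom of $\mathbf{SEM}$, the lattice $L(\mathcal{SL})$ is the two-element chain $\{\mathcal T,\mathcal{SL}\}$, so $L(\mathcal V)\times L(\mathcal{SL})$ consists of two disjoint copies of $L(\mathcal V)$; moreover $\mathcal{W\wedge SL}=\mathcal{SL}$ when $\mathcal{SL\subseteq W}$ and $\mathcal{W\wedge SL}=\mathcal T$ otherwise, and in particular $\mathcal{V\wedge SL}=\mathcal T$ by hypothesis. I would consider the order-preserving maps
$$\Phi\colon L(\mathcal{V\vee SL})\to L(\mathcal V)\times L(\mathcal{SL}),\qquad\mathcal W\longmapsto(\mathcal{W\wedge V},\,\mathcal{W\wedge SL}),$$
$$\Psi\colon L(\mathcal V)\times L(\mathcal{SL})\to L(\mathcal{V\vee SL}),\qquad(\mathcal X,\mathcal Y)\longmapsto\mathcal{X\vee Y},$$
which are plainly well defined. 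As an order-preserving bijection between lattices whose inverse is order-preserving is automatically a lattice isomorphism, it suffices to show that $\Phi$ and $\Psi$ are mutually inverse.

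Unwinding the identities $\Psi\Phi=\mathrm{id}$ and $\Phi\Psi=\mathrm{id}$, and using the atom property of $\mathcal{SL}$ together with $\mathcal{V\nsupseteq SL}$ to dispose of the easy coordinates, one sees that everything reduces to proving, for any semigroup variety $\mathcal U$ with $\mathcal{U\nsupseteq SL}$ and any subvariety $\mathcal W$ of $\mathcal{U\vee SL}$, the equality
$$\mathcal W=(\mathcal{W\wedge U})\vee(\mathcal{W\wedge SL})\ldotp$$
Indeed, for $\mathcal U=\mathcal V$ this is precisely $\Psi\Phi(\mathcal W)=\mathcal W$; and the only non-immediate point in $\Phi\Psi=\mathrm{id}$ is that $(\mathcal{X\vee SL})\wedge\mathcal V=\mathcal X$ for $\mathcal{X\subseteq V}$, which follows by applying the displayed equality with $\mathcal U=\mathcal X$ and $\mathcal W=(\mathcal{X\vee SL})\wedge\mathcal V$: here $\mathcal{W\wedge SL}=\mathcal T$ since $\mathcal W\subseteq\mathcal V$ and $\mathcal{V\nsupseteq SL}$, so the equality gives $\mathcal W=\mathcal{W\wedge X}\subseteq\mathcal X$, whence $\mathcal W=\mathcal X$.

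It remains to establish the displayed equality, and this is immediate once one knows that $\mathcal{SL}$ is a neutral element of the ambient lattice: then $\mathcal{SL}$, $\mathcal W$ and $\mathcal U$ generate a distributive sublattice, inside which $\mathcal W=\mathcal W\wedge(\mathcal{U\vee SL})=(\mathcal{W\wedge U})\vee(\mathcal{W\wedge SL})$. For the commutative varieties to which the lemma is applied in this paper $\mathcal{SL}$ is neutral in $\mathbf{Com}$ by Proposition~\ref{neutr}; in general the neutrality of $\mathcal{SL}$ in $\mathbf{SEM}$ is a classical fact. Taking this input for granted there is essentially nothing further to do, so the only genuinely delicate point occurs on the alternative, self-contained route, where one replaces neutrality by an argument with word contents: writing $c(w)$ for the set of letters of a word $w$, the variety $\mathcal{X\vee SL}$ satisfies exactly the identities $u=v$ of $\mathcal X$ with $c(u)=c(v)$, while $\mathcal{SL\nsubseteq W}$ forces $\mathcal W$ to satisfy some identity with unequal contents. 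The main obstacle on that route is an ``absorption'' step: using such an identity of $\mathcal W$ to cancel, inside $\mathcal W$, the padding word needed to equalise the two contents of an arbitrary identity of $\mathcal U$.
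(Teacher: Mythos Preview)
The paper does not actually prove this lemma: it is introduced as ``generally known'' and terminates with a \verb|\qed| immediately after the statement. So there is no proof in the paper to compare against; your proposal is simply a correct way to justify this folklore fact.

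Your argument is sound. The reduction of both $\Psi\Phi=\mathrm{id}$ and the nontrivial coordinate of $\Phi\Psi=\mathrm{id}$ to the single equality $\mathcal W=(\mathcal{W\wedge U})\vee(\mathcal{W\wedge SL})$ is clean, and deducing that equality from the neutrality of $\mathcal{SL}$ is legitimate: neutrality of $\mathcal{SL}$ in $\mathbf{SEM}$ is itself a classical result (due to Volkov), so no circularity arises, and for the commutative case Proposition~\ref{neutr} is already available earlier in the paper. The alternative ``content'' route you sketch is in fact the more traditional, self-contained proof of the lemma and historically precedes the neutrality statement; you correctly identify the absorption step (using an identity of $\mathcal W$ with unequal content sets to erase the padding variables) as the only point requiring care.
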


We denote by $F$ the free semigroup. The symbol $\equiv$ stands for the equality relation on $F$. If $u\in F$ then we denote by $c(u)$ the set of letters occurring in $u$ and by $\ell(u)$ the length of $u$. If $u,v\in F$ then we write $u\vartriangleleft v$ whenever $v\equiv a\xi(u)b$ for some (maybe empty) words $a$ and $b$ and some homomorphism $\xi$ of $F$. The first claim in the following lemma is evident, while the second one follows from~\cite[Lemma~1.3(iii)]{Vernikov-Volkov-00}.

\begin{lemma}
\label{splitting}
Let $\mathcal N$ be a nil-variety of semigroups.
\begin{itemize}
\item[\textup{(i)}]If $\mathcal N$ satisfies an identity $u=v$ with $c(u)\ne c(v)$ then $\mathcal N$ also satisfies the identity $u=0$.
\item[\textup{(ii)}]If $\mathcal N$ is commutative and satisfies an identity $u=v$ with $u\vartriangleleft v$ then $\mathcal N$ also satisfies the identity $u=0$.\qed
\end{itemize}
\end{lemma}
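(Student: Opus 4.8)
The plan is to prove the two parts separately. Part~(i) is a one-step substitution argument; part~(ii) is quoted from~\cite{Vernikov-Volkov-00}, so it suffices to explain how it reduces to part~(i) and to sketch the underlying idea. Throughout I use the well-known fact that a nil-variety satisfies $x^n=0$ for a suitable natural number $n$.

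For part~(i), suppose $\mathcal N\models u=v$ with $c(u)\ne c(v)$ and pick a letter $x$ lying in exactly one of the sets $c(u),c(v)$; we may assume $x\in c(u)\setminus c(v)$, the opposite case being symmetric. Fix $n$ with $\mathcal N\models x^n=0$ and apply to the identity $u=v$ the substitution sending $x$ to $x^n$ and fixing all other letters. The word $v$ is unaffected because $x\notin c(v)$, while the resulting word on the left-hand side has $x^n$ as a subword; since $x^n$ takes the zero value in every member of $\mathcal N$ and the zero is absorbing, that left-hand side takes the zero value identically. Hence $\mathcal N\models v=0$, and therefore $\mathcal N\models u=v=0$; in particular $\mathcal N\models u=0$.

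For part~(ii), let $\mathcal N$ be commutative with $\mathcal N\models u=v$ and $v\equiv a\,\xi(u)\,b$. If $c(u)\ne c(v)$, part~(i) applies, so assume $c(u)=c(v)=:X$. From $c(\xi(u))\subseteq c(v)=X$ one gets $c(\xi(y))\subseteq X$ for each $y\in X$, so $\xi$ restricts to an endomorphism of the free semigroup over $X$; consequently $a$, $b$ and all the words $\xi^k(u)$ have content inside $X$, and in fact $c(\xi^k(u))=X$ for every $k\ge 1$. Now iterate the identity $u=v$ through $\xi$: one obtains $\mathcal N\models u=w_k$ for a sequence of words $w_k$ over the fixed finite alphabet $X$ whose lengths do not decrease in $k$, and — using that $v$ is a \emph{proper} magnification of $u$ (either one of $a,b$ is non-empty, or $\xi$ lengthens some letter of $X$, which then recurs at each step since $c(\xi^k(u))=X$) — one gets $\ell(w_k)\to\infty$. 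As soon as $\ell(w_k)>(n-1)|X|$, where $n$ is chosen with $\mathcal N\models x^n=0$, some letter occurs in $w_k$ at least $n$ times; by commutativity these occurrences may be gathered into a block equal to a power $z^n$, so $\mathcal N\models w_k=0$ and hence $\mathcal N\models u=0$. This reproduces~\cite[Lemma~1.3(iii)]{Vernikov-Volkov-00}.

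The substitution in part~(i) is entirely routine, and I expect no difficulty there. In part~(ii) the only delicate point is the bookkeeping that forces $\ell(w_k)\to\infty$ while the alphabet stays equal to $X$ — this is exactly why one first reduces to the case $c(u)=c(v)$ (so that $\xi$ preserves $X$), and it is the step taken care of in the cited reference.
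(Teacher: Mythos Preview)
Your proof of~(i) is correct and is exactly what the paper intends by ``evident''. For~(ii) the paper gives no argument beyond citing \cite[Lemma~1.3(iii)]{Vernikov-Volkov-00}, so your sketch is genuinely adding content. One cosmetic remark first: the claim $c(\xi^k(u))=X$ for all $k\ge1$ is not true in general once you have merely reduced to $c(u)=c(v)$ (take $u=xy$, $\xi(x)=\xi(y)=x$, $a=y$, $b$ empty: then $c(\xi(u))=\{x\}\ne X$). The equality \emph{does} hold, and is only needed, in the sub-case $a=b=\emptyset$, where $c(\xi(u))=c(v)=X$ and an easy induction then gives $c(\xi^{k}(u))=\bigcup_{y\in X}c(\xi(y))=X$ for all $k$. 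So this is a matter of placement rather than substance.

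The substantive point is that your dichotomy ``either $a$ or $b$ is non-empty, or $\xi$ lengthens some letter of $X$'' misses a third possibility when $a=b=\emptyset$: that $\xi$ sends every letter of $X$ to a single letter and hence (since $c(\xi(u))=X$) acts as a permutation of $X$. Then $w_k=\xi^k(u)$ has constant length and your argument stalls. You cannot repair this, because the lemma as literally stated in the paper is actually false in that case: the variety $\mathcal J=\var\{x^2yz=x^3=0,\,x^2y=xy^2,\,xy=yx\}$ from Fig.~\ref{L(I)} is a commutative nil-variety satisfying $x^2y=y^2x$ (combine $x^2y=xy^2$ with commutativity), and $x^2y\vartriangleleft y^2x$ via $\xi(x)=y$, $\xi(y)=x$ with $a=b$ empty; yet $\mathcal J$ does not satisfy $x^2y=0$ (otherwise $\mathcal J\subseteq\mathcal K$, contrary to Fig.~\ref{L(I)}). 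In every use of Lemma~\ref{splitting}(ii) in the paper one has $\ell(u)<\ell(v)$; with that extra hypothesis the permutation case is excluded and your proof is complete.
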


We need the following two technical corollaries from Lemma~\ref{splitting}. Put $W=\{x^2y,xyx,yx^2,y^2x,yxy,xy^2\}$.

\begin{corollary}
\label{xxy=w}
If a commutative nil-variety of semigroups  $\mathcal N$ satisfies an identity of the form $u=v$ where $u\in\{x^2y,xy^2\}$ and $v\notin W$ then $\mathcal N$ also satisfies the identity~\eqref{xxy=0}.
\end{corollary}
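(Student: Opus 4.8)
The plan is to derive the identity~\eqref{xxy=0} from Lemma~\ref{splitting} after a short reduction. First I would note that it is enough to deal with the case $u\equiv x^2y$: the transposition of the letters $x$ and $y$ turns the identity $u=v$ into an identity of the same form, because the set $W$ is invariant under this transposition; so the case $u\equiv xy^2$ reduces to the case $u\equiv x^2y$.

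Next I would argue according to the content of $v$. If $c(v)\ne c(u)=\{x,y\}$, then Lemma~\ref{splitting}(i) shows at once that $\mathcal N$ satisfies $x^2y=0$, which is~\eqref{xxy=0}. So suppose that $c(v)=\{x,y\}$. Since $\mathcal N$ is commutative, it satisfies the identity $v=x^py^q$, where $p$ and $q$ denote the numbers of occurrences of the letters $x$ and $y$ in $v$; in particular $p,q\ge1$, $p+q=\ell(v)$, and $\mathcal N$ satisfies $x^2y=x^py^q$. The crucial elementary observation is that $W$ is exactly the set of all words of length $3$ with content $\{x,y\}$; hence the hypothesis $v\notin W$ forces $\ell(v)=p+q\ne3$, and since $p+q\ge2$ the two cases that remain are $p+q=2$ and $p+q\ge4$.

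Suppose first that $p+q=2$. Then $p=q=1$, so that $\mathcal N$ satisfies $x^2y=xy$; since $x^2y\equiv x\cdot(xy)$, we have $xy\vartriangleleft x^2y$ and $xy\not\equiv x^2y$, whence Lemma~\ref{splitting}(ii) gives that $\mathcal N$ satisfies $xy=0$, and therefore also $x^2y=0$. Suppose now that $p+q\ge4$. Here I would verify that $x^2y\vartriangleleft x^py^q$. If $p\ge2$, the identity homomorphism serves the purpose, because $x^py^q\equiv x^{p-2}\cdot(x^2y)\cdot y^{q-1}$, where the prefix $x^{p-2}$ and the suffix $y^{q-1}$ are allowed to be empty. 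If $p=1$, then $q\ge3$, and the homomorphism taking both $x$ and $y$ to $y$ serves the purpose, because $x^py^q\equiv xy^{q-3}\cdot y^3$. Since the words $x^2y$ and $x^py^q$ have lengths $3$ and $p+q\ge4$ respectively, they are distinct, so Lemma~\ref{splitting}(ii) applied to the identity $x^2y=x^py^q$ shows that $\mathcal N$ satisfies $x^2y=0$, as required.

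I do not anticipate a real obstacle; the proof is a finite case check. The only steps calling for some care are the two applications of Lemma~\ref{splitting}(ii): one has to be sure the two words involved are genuinely distinct (so that the lemma is used non-vacuously) and that the words witnessing $x^2y$ as a factor of $x^py^q$ may be empty. It is also worth stressing the observation --- trivial once noticed --- that the case $\ell(v)=3$ does not arise at all.
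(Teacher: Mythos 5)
Your proof is correct and follows essentially the same route as the paper's: split according to whether $c(v)=\{x,y\}$, use Lemma~\ref{splitting}(i) otherwise, and in the remaining cases normalise $v$ by commutativity and apply Lemma~\ref{splitting}(ii) after checking the relevant $\vartriangleleft$ relations (your explicit verification that $x^2y\vartriangleleft x^py^q$ for $p+q\ge4$ is exactly what the paper leaves as ``easy to see''). The only cosmetic difference is that you handle $u\equiv xy^2$ by swapping letters (tacitly using commutativity to pass from $yx^2$ to $x^2y$), whereas the paper repeats the argument and invokes the equivalence of $xy^2=0$ and~\eqref{xxy=0} modulo the commutative law.
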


\begin{proof}
Suppose at first that $u\equiv x^2y$. If $c(v)\ne\{x,y\}$ then $\mathcal N$ satisfies the identity~\eqref{xxy=0} by Lemma~\ref{splitting}(i). Let now $c(v)=\{x,y\}$. If $\ell(v)<3$ then $v\vartriangleleft x^2y$ and Lemma~\ref{splitting}(ii) implies that $\mathcal N$ satisfies the identity~\eqref{xxy=0} again. If $\ell(v)=3$ then $v\in W$ contradicting the hypothesis. Finally, if $\ell(v)>3$ then it is easy to see that $v$ equals in $\mathcal{COM}$ (and therefore, in $\mathcal N$) to a word $v'$ such that $x^2y\vartriangleleft v'$. Now Lemma~\ref{splitting}(ii) applies again, and we conclude that $\mathcal N$ satisfies the identity~\eqref{xxy=0} as well.

The case when $u\equiv xy^2$ may be considered quite analogously with the conclusion that $\mathcal N$ satisfies the identity $xy^2=0$ that is equivalent to~\eqref{xxy=0} modulo the commutative law.
\end{proof}

\begin{corollary}
\label{xxy=xyy implies xxyz=0}
If a commutative nil-variety of semigroups  $\mathcal N$ satisfies the identity~\eqref{xxy=xyy} then $\mathcal N$ also satisfies the identity~\eqref{xxyz=0}.
\end{corollary}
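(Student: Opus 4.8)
The plan is to combine a few substitution instances of the identity~\eqref{xxy=xyy} (together with the commutative law, which holds in $\mathcal N$) so as to manufacture an identity of the form $u=v$ with $u\vartriangleleft v$, and then to invoke Lemma~\ref{splitting}(ii). A single substitution does not suffice: replacing $y$ by $yz$ in~\eqref{xxy=xyy} only gives that $\mathcal N$ satisfies $x^2yz=xy^2z^2$, and neither side ``divides'' the other in the sense of $\vartriangleleft$. The trick is that two different consequences of~\eqref{xxy=xyy} can be glued together so that both sides share a common ``core'' while one of them carries an extra occurrence of a letter.

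Concretely, I would first multiply~\eqref{xxy=xyy} by $z$ on the right, obtaining that $\mathcal N$ satisfies $x^2yz=xy^2z$. Next I would substitute $x\mapsto xz$ in~\eqref{xxy=xyy}; the resulting identity reads $(xz)^2y=(xz)y^2$, and rewriting both sides modulo the commutative law turns it into $x^2yz^2=xy^2z$ in $\mathcal N$. Combining these two consequences by transitivity through their common side $xy^2z$, we conclude that $\mathcal N$ satisfies $x^2yz=x^2yz^2$.

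Finally, since the word $x^2yz^2$ is literally $(x^2yz)\cdot z$, we have $x^2yz\vartriangleleft x^2yz^2$ (take the identity endomorphism of $F$, empty left factor, and right factor $z$). As $\mathcal N$ is a commutative nil-variety satisfying $x^2yz=x^2yz^2$, Lemma~\ref{splitting}(ii) applies and yields that $\mathcal N$ satisfies $x^2yz=0$, which is exactly~\eqref{xxyz=0}. I do not anticipate any real obstacle beyond spotting the right pair of substitutions; the only point needing a bit of care is the routine check in $\mathcal{COM}$ that, after applying commutativity, the two consequences of~\eqref{xxy=xyy} genuinely have $xy^2z$ as a common side. (Alternatively, from $x^2yz=x^2yz^2$ one gets $x^2yz=x^2yz^k$ for every $k$ and then uses nilpotency directly, but the route through Lemma~\ref{splitting}(ii) is the one the preceding setup is tailored for.)
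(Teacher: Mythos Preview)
Your proof is correct, but the motivating remark is not: in fact $x^2yz\vartriangleleft xy^2z^2$ \emph{does} hold. Take the endomorphism $\xi$ of $F$ with $\xi(x)=y$, $\xi(y)=\xi(z)=z$; then $\xi(x^2yz)\equiv y^2z^2$, so $xy^2z^2\equiv x\cdot\xi(x^2yz)$ with $a\equiv x$ and $b$ empty. This is exactly the route the paper takes: a single substitution $y\mapsto yz$ in~\eqref{xxy=xyy} yields $x^2yz=xy^2z^2$ in $\mathcal N$, and Lemma~\ref{splitting}(ii) finishes the argument at once.

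Your two-substitution detour through the common side $xy^2z$ is a legitimate alternative. It trades the slightly hidden $\vartriangleleft$ relation above for an extra instance of~\eqref{xxy=xyy} plus a commutativity rewrite, after which the divisibility $x^2yz\vartriangleleft x^2yz^2$ is transparent (identity endomorphism, right factor $z$). Both arguments have the same architecture---manufacture an identity $u=v$ with $u\vartriangleleft v$ and invoke Lemma~\ref{splitting}(ii)---so the difference is purely cosmetic; the paper's version is a line shorter because it recognises the divisibility directly rather than working around it.
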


\begin{proof}
Substituting $yz$ to $y$ in the identity~\eqref{xxy=xyy}, we obtain $x^2yz=x(yz)^2=xy^2z^2$. Since $x^2yz\vartriangleleft xy^2z^2$, it remains to refer to Lemma~\ref{splitting}(ii).
\end{proof}

A semigroup variety $\mathcal V$ is called a \emph{variety of degree} $n$ if all nilsemigroups in $\mathcal V$ are nilpotent of degree $\le n$ and $n$ is the least number with such property. A variety is said to be a \emph{variety of finite degree} if it has a degree $n$ for some $n$; otherwise, a variety is called a \emph{variety of infinite degree}. The following lemma follows from~\cite[Proposition~2.11]{Vernikov-08a} and~\cite[Theorem~2]{Tishchenko-90}.

\begin{lemma}
\label{commut fin deg}
A commutative semigroup variety $\mathcal V$ is a variety of degree $\le n$ if and only if it satisfies an identity of the form
\begin{equation}
\label{commut fin deg equ}
x_1x_2\cdots x_n=(x_1x_2\cdots x_n)^{t+1}
\end{equation}
for some natural number $t$.\qed
\end{lemma}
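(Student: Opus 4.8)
The plan is to prove the two implications separately; the ``if'' part is immediate and the ``only if'' part is the substantial one.

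\emph{``If''.} Suppose $\mathcal V$ satisfies an identity $x_1x_2\cdots x_n=(x_1x_2\cdots x_n)^{t+1}$. If $S$ is a nilsemigroup in $\mathcal V$ and $a_1,\dots,a_n\in S$, put $b=a_1a_2\cdots a_n$; the identity gives $b=b^{t+1}$, whence, multiplying repeatedly by $b^t$, $b=b^{kt+1}$ for all $k\ge1$, and since $b$ is nilpotent this forces $b=0$. Thus $S^n$ is a single point, i.e.\ $S$ is nilpotent of degree $\le n$, so $\mathcal V$ is a variety of degree $\le n$.

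\emph{``Only if''.} Assume $\mathcal V$ is commutative of degree $\le n$ and work in the relatively free semigroup $F=F_{\mathcal V}$ of countable rank. Let $J$ be the fully invariant ideal of $F$ generated by $x^{n+1}$. For every $w\in F$ the endomorphism $x\mapsto w$ sends $x^{n+1}$ to $w^{n+1}$, which therefore lies in $J$; hence the Rees quotient $F/J$ satisfies $y^{n+1}=0$ and is a nilsemigroup, and it lies in $\mathcal V$. By the degree hypothesis $F/J$ is nilpotent of degree $\le n$, that is, $F^n\subseteq J$. Since $x_1x_2\cdots x_n$ is a product of $n$ elements of $F$, we get $x_1x_2\cdots x_n\in F^n\subseteq J$, so there are a nonempty word $v$ and (possibly empty) words $u,w$ with $\mathcal V\models x_1x_2\cdots x_n=uv^{n+1}w$, an identity whose right-hand side has length $>n$.

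It remains to transform this length-reducing identity into the required one, using commutativity. Replacing every letter other than $x_1,\dots,x_n$ by $x_1$, we obtain $\mathcal V\models x_1x_2\cdots x_n=x_1^{c_1}x_2^{c_2}\cdots x_n^{c_n}$ with $c_i\ge0$ and $\sum_ic_i>n$. If some $c_j=0$, substituting $x_j\mapsto x_j^2$ (which leaves the right-hand side unchanged) replaces this identity by $\mathcal V\models x_1x_2\cdots x_n=x_1\cdots x_{j-1}x_j^2x_{j+1}\cdots x_n$; so in all cases we may assume all $c_i\ge1$, while still $\sum_ic_i>n$, hence $c_i\ge2$ for some $i$. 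Now symmetrize: for each permutation $\sigma$ of $\{x_1,\dots,x_n\}$, applying $\sigma$ to the last identity (whose left-hand side is $\sigma$-invariant) yields $\mathcal V\models x_1x_2\cdots x_n=x_1x_2\cdots x_n\cdot m_\sigma$, where $m_\sigma$ is obtained from $x_1^{c_1-1}x_2^{c_2-1}\cdots x_n^{c_n-1}$ by permuting the letters by $\sigma$. Multiplying one side of each of these $n!$ identities by a suitable word and chaining them gives $\mathcal V\models x_1x_2\cdots x_n=x_1x_2\cdots x_n\cdot\prod_\sigma m_\sigma$; and in $\prod_\sigma m_\sigma$ every letter occurs with the same exponent $D=(n-1)!\bigl(\sum_ic_i-n\bigr)\ge1$, so $\prod_\sigma m_\sigma=(x_1x_2\cdots x_n)^D$. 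Hence $\mathcal V\models x_1x_2\cdots x_n=(x_1x_2\cdots x_n)^{1+D}$, the required identity with $t=D$.

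I expect the middle step to be the main obstacle: recognizing that the degree hypothesis can be converted into a length-reducing identity with $x_1x_2\cdots x_n$ on the left, by passing to the Rees quotient of $F$ by the fully invariant ideal generated by a variable power. Once that identity is available, the commutative bookkeeping (dealing with vanishing letters) and the symmetrization are routine. Alternatively, the whole implication can be read off from the descriptions of semigroup varieties of finite degree cited in the statement of the lemma.
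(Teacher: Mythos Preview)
Your argument is correct. The paper itself does not prove this lemma: it simply records that the statement follows from \cite[Proposition~2.11]{Vernikov-08a} and \cite[Theorem~2]{Tishchenko-90}, and places a \qed. So your proof is not a variant of the paper's proof but a self-contained substitute for a citation.

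A brief comparison: the cited results give a general structural characterization of semigroup varieties of finite degree (in terms of identities of the form $x_1\cdots x_n = x_1\cdots x_n\cdot w$), from which the commutative form~\eqref{commut fin deg equ} drops out. Your route is more direct in this commutative setting: you manufacture a single length-increasing identity with left side $x_1\cdots x_n$ by taking the Rees quotient of $F_{\mathcal V}$ by the fully invariant ideal generated by $x^{n+1}$, and then use commutativity twice --- once to repair a possibly missing letter via the substitution $x_j\mapsto x_j^2$, and once to symmetrize over all permutations --- to force the right side into the shape $(x_1\cdots x_n)^{1+D}$. Each step is sound; in particular the ``chaining'' works because from $x_1\cdots x_n = (x_1\cdots x_n)m_\sigma$ one may multiply both sides by the previously accumulated product and iterate. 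The payoff of your approach is that the lemma becomes independent of the external references, at the cost of a short extra page of elementary manipulation.
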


If $\mathcal V$ is a variety of finite degree then we denote the degree of $\mathcal V$ by $\deg(\mathcal V)$; otherwise, we write $\deg(\mathcal V)=\infty$.

\begin{corollary}
\label{deg of join}
If $\mathcal X$ and $\mathcal Y$ are commutative semigroup varieties then
$$\mathcal{\deg(X\vee Y)=\max\bigl\{\deg(X),\deg(Y)\bigr\}}\ldotp$$
\end{corollary}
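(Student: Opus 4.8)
The plan is to reduce the statement to Lemma~\ref{commut fin deg} together with standard facts about degrees of joins.  First I would recall that for any commutative semigroup varieties $\mathcal X$ and $\mathcal Y$ one has $\mathcal X,\mathcal Y\subseteq\mathcal{X\vee Y}$, whence every nilsemigroup in $\mathcal X$ or in $\mathcal Y$ lies in $\mathcal{X\vee Y}$; this gives the inequality $\deg(\mathcal{X\vee Y})\ge\max\{\deg(\mathcal X),\deg(\mathcal Y)\}$ immediately (and handles the case when one of the two degrees is infinite, since then both sides are~$\infty$).

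For the reverse inequality I would assume $\deg(\mathcal X)=p$ and $\deg(\mathcal Y)=q$ are both finite and put $n=\max\{p,q\}$.  By Lemma~\ref{commut fin deg}, $\mathcal X$ satisfies an identity $x_1\cdots x_p=(x_1\cdots x_p)^{s+1}$ and $\mathcal Y$ satisfies $x_1\cdots x_q=(x_1\cdots x_q)^{r+1}$ for suitable natural $s,r$.  The routine observation is that an identity of the form~\eqref{commut fin deg equ} for the parameter $p\le n$ forces, in a commutative variety, an identity of the same shape for the parameter $n$ (multiply through by $x_{p+1}\cdots x_n$ and iterate / adjust the exponent); so both $\mathcal X$ and $\mathcal Y$ satisfy identities $x_1\cdots x_n=(x_1\cdots x_n)^{t_1+1}$ and $x_1\cdots x_n=(x_1\cdots x_n)^{t_2+1}$ respectively.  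Taking $t$ to be a common value obtainable from $t_1$ and $t_2$ (for instance, using that $w=w^{t+1}$ implies $w=w^{kt+1}$ for every $k\ge1$, so one may take $t=\lcm(t_1,t_2)$), we get that the single identity $x_1\cdots x_n=(x_1\cdots x_n)^{t+1}$ holds in both $\mathcal X$ and $\mathcal Y$, hence in $\mathcal{X\vee Y}$.  Lemma~\ref{commut fin deg} then yields $\deg(\mathcal{X\vee Y})\le n$, and combined with the first paragraph this gives equality.

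The only mildly delicate point — the ``main obstacle'', such as it is — is the bookkeeping that turns identity~\eqref{commut fin deg equ} with a smaller number of variables and a given exponent into the same identity with $n$ variables and a single common exponent $t$; this rests on the elementary facts that in a commutative semigroup $x_1\cdots x_p=(x_1\cdots x_p)^{s+1}$ implies $x_1\cdots x_n=(x_1\cdots x_n)^{s+1}$ (substitute and use commutativity to regroup the surplus letters) and that $w=w^{s+1}$ implies $w=w^{ks+1}$ for all $k\ge1$.  Neither requires more than a one-line verification, so the corollary follows quickly from Lemma~\ref{commut fin deg}.
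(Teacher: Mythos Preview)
Your proposal is correct and follows essentially the same route as the paper: derive identities of the form~\eqref{commut fin deg equ} for $\mathcal X$ and $\mathcal Y$ via Lemma~\ref{commut fin deg}, upgrade both to $n=\max\{p,q\}$ variables by substitution, pass to a common exponent (the paper uses the product, you use the $\lcm$---either works), and apply Lemma~\ref{commut fin deg} again to the join. One small wording point: ``multiply through by $x_{p+1}\cdots x_n$'' does not by itself yield $(x_1\cdots x_n)^{s+1}$ on the right; what you actually need (and what you correctly say in your last paragraph, and what the paper does) is the \emph{substitution} $x_p\mapsto x_p x_{p+1}\cdots x_n$.
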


\begin{proof}
If at least one of the varieties $\mathcal X$ or $\mathcal Y$ has infinite degree then
$$\mathcal{\deg(X\vee Y)=\infty=\max\bigl\{\deg(X),\deg(Y)\bigr\}}\ldotp$$
Let now $\deg(\mathcal X)=k$ and $\deg(\mathcal Y)=m$. Lemma~\ref{commut fin deg} implies that the varieties $\mathcal X$ and $\mathcal Y$ satisfy, respectively, the identities
$$x_1x_2\cdots x_k=(x_1x_2\cdots x_k)^{r+1}$$
and
$$x_1x_2\cdots x_m=(x_1x_2\cdots x_m)^{s+1}$$
for some $r$ and $s$. Suppose without loss of generality that $k\le m$. Substitute $x_k\cdots x_m$ to $x_k$ in the first of the two mentioned identities. We have that $\mathcal X$ satisfies the identity
$$x_1x_2\cdots x_m=(x_1x_2\cdots x_m)^{r+1}\ldotp$$
Then $\mathcal{X\vee Y}$ satisfies the identity
$$x_1x_2\cdots x_m=(x_1x_2\cdots x_m)^{rs+1}\ldotp$$
Now Lemma~\ref{splitting}(ii) applies with the conclusion that
$$\deg(\mathcal{X\vee Y})\le m=\max\bigl\{\mathcal{\deg(X),\deg(Y)}\bigr\}\ldotp$$
Since the unequality $\max\bigl\{\mathcal{\deg(X),\deg(Y)}\bigr\}\le\deg(\mathcal{X\vee Y})$ is evident, we are done.
\end{proof}

The following statement follows from~\cite[Proposition~1]{Volkov-89} and results of the article~\cite{Head-68}.

\begin{lemma}
\label{decomposition}
If $\mathcal V$ is a commutative semigroup variety and $\mathcal{V\ne COM}$ then $\mathcal{V=G\vee C}_m\vee\mathcal N$ for some Abelian periodic group variety $\mathcal G$, some $m\ge0$ and some nil-variety $\mathcal N$.\qed
\end{lemma}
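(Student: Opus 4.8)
The plan is to reduce the claim to the periodic case, to build the three summands as the largest subvarieties of $\mathcal V$ of the relevant types, and then to glue them together using the coordinatization of periodic commutative semigroups --- the last step ultimately leaning on~\cite[Proposition~1]{Volkov-89} and~\cite{Head-68}.

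First I would show that $\mathcal{V\ne COM}$ forces $\mathcal V$ to be periodic. If some member of $\mathcal V$ were non-periodic it would contain an element of infinite order, hence a copy of the free monogenic semigroup; the latter satisfies no identity that is not a consequence of commutativity, so it generates $\mathcal{COM}$, whence $\mathcal{V=COM}$ --- a contradiction. Thus the free monogenic semigroup in $\mathcal V$ is finite, i.e. $\mathcal V$ satisfies an identity $x^n=x^{n+k}$ for some $n,k\ge1$, and every member of $\mathcal V$ is a commutative periodic semigroup. For such semigroups I would then recall the classical structure theory: a commutative periodic semigroup $S$ is a semilattice $Y$ of its archimedean components $S_\alpha$, each $S_\alpha$ being an ideal extension of an Abelian group $G_\alpha$ by a nilsemigroup; moreover, since $S$ satisfies $x^n=x^{n+k}$, for every $a\in S_\alpha$ the power $a^n$ already lies in $G_\alpha$, so the index of each component is bounded by $n$.

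Next I would fix the candidate summands. Let $\mathcal G$ be the largest subvariety of $\mathcal V$ that is a variety of groups (equivalently, the variety generated by all maximal subgroups of members of $\mathcal V$); since $\mathcal V\subseteq\mathcal{COM}$ is periodic, $\mathcal G$ is an Abelian periodic group variety. Let $\mathcal N$ be the largest nil-subvariety of $\mathcal V$. Finally, using the chain $\mathcal C_0\subseteq\mathcal C_1\subseteq\mathcal C_2\subseteq\cdots$ (which holds because $x^j=x^{j+1}$ implies $x^{j+1}=x^{j+2}$) together with the remark that $\mathcal C_m\subseteq\mathcal V$ forces $m\le n$ --- the monogenic semigroup $\langle a\mid a^m=a^{m+1}\rangle\in\mathcal C_m$ would otherwise have to satisfy $x^n=x^{n+k}$, which it does not when $m>n$ --- let $m\ge0$ be the largest integer with $\mathcal C_m\subseteq\mathcal V$. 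The inclusion $\mathcal{G\vee C}_m\vee\mathcal N\subseteq\mathcal V$ is then immediate from the choices.

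It remains to prove $\mathcal V\subseteq\mathcal{G\vee C}_m\vee\mathcal N$, and this is the step I expect to be the main obstacle. The idea is to realize each $S\in\mathcal V$ as a homomorphic image of a subsemigroup of a direct product of members of $\mathcal G$, $\mathcal C_m$ and $\mathcal N$, reading the factors off the coordinatization above: the groups $G_\alpha$ feed the $\mathcal G$-coordinate, the nil quotients of the $S_\alpha$ feed the $\mathcal N$-coordinate, and the semilattice $Y$ together with the bounded-index skeleton --- which is exactly what commutativity and $x^m=x^{m+1}$ axiomatize --- feeds the $\mathcal C_m$-coordinate. Equivalently, arguing on identities, one shows that a commutative identity $u=v$ failing in $\mathcal V$ must already fail in one of $\mathcal G$, $\mathcal C_m$ or $\mathcal N$, by splitting into the cases $c(u)\ne c(v)$ (where Lemma~\ref{splitting}(i) and the maximality of $\mathcal N$ are used) and $c(u)=c(v)$ (where the group coordinate $\mathcal G$ and the maximality of $m$ are used). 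The delicate point is to show that the single integer $m$ --- chosen as large as possible with $\mathcal C_m\subseteq\mathcal V$ --- is simultaneously large enough to absorb the semilattice-and-index skeleton of every member of $\mathcal V$, i.e. that the monogenic parameter $n$ and the multilinear word identities of $\mathcal V$ cannot conflict with the asserted shape. This compatibility is precisely the content of~\cite[Proposition~1]{Volkov-89}; combined with Head's description~\cite{Head-68} of the lattice of commutative semigroup varieties, it completes the argument.
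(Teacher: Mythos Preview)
Your proposal is correct and ultimately rests on the same two references the paper invokes: the paper does not prove this lemma at all but simply states that it ``follows from~\cite[Proposition~1]{Volkov-89} and results of the article~\cite{Head-68}'' and appends a \qed. Your write-up is therefore strictly more detailed than the paper's own treatment --- you supply the reduction to the periodic case, identify the three summands as the largest subvarieties of the appropriate types, and only then defer the gluing step to the same citations; nothing in your sketch is wrong, and the parts you flag as delicate are precisely what those references cover.
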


It is well known that an arbitrary periodic semigroup variety $\mathcal V$ contains the greatest group subvariety that we denote by $\Gr(\mathcal V)$. A semigroup variety $\mathcal V$ is called \emph{combinatorial} if all groups in $\mathcal V$ are singletons.

\begin{lemma}
\label{Gr of join}
If $\mathcal G$ is a periodic group variety and $\mathcal F$ is a combinatorial semigroup variety then $\mathcal{\Gr(G\vee F)=G}$.
\end{lemma}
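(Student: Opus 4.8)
The plan is to prove the two inclusions $\mathcal{G} \subseteq \Gr(\mathcal{G} \vee \mathcal{F})$ and $\Gr(\mathcal{G} \vee \mathcal{F}) \subseteq \mathcal{G}$ separately. The first inclusion is immediate, since $\mathcal{G} \subseteq \mathcal{G} \vee \mathcal{F}$ and $\mathcal{G}$ is a group variety, so $\mathcal{G} \subseteq \Gr(\mathcal{G} \vee \mathcal{F})$ by the very definition of $\Gr$ as the greatest group subvariety. So the substance is in the reverse inclusion.

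For the reverse inclusion, I would take an arbitrary group $H \in \mathcal{G} \vee \mathcal{F}$ and show $H \in \mathcal{G}$. The idea is to use that every variety is generated by its finitely generated members, and that a finitely generated member of a join $\mathcal{G} \vee \mathcal{F}$ embeds into a direct product $A \times B$ with $A \in \mathcal{G}$ and $B \in \mathcal{F}$ (this is the standard description of joins of varieties). If $H$ is a group sitting inside $A \times B$, project to each factor: the image of $H$ in $A$ is a group in $\mathcal{G}$, and the image of $H$ in $B$ is a group in $\mathcal{F}$, hence a singleton since $\mathcal{F}$ is combinatorial. Therefore $H$ embeds into its projection to $A$, so $H \in \mathcal{G}$. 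Since every group in $\mathcal{G} \vee \mathcal{F}$ lies in $\mathcal{G}$, we get $\Gr(\mathcal{G} \vee \mathcal{F}) \subseteq \mathcal{G}$, and combined with the first inclusion, $\Gr(\mathcal{G} \vee \mathcal{F}) = \mathcal{G}$.

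The main obstacle, and the only point requiring care, is justifying that an arbitrary (not necessarily finitely generated) group in $\mathcal{G} \vee \mathcal{F}$ already lies in $\mathcal{G}$: one cannot directly embed an infinite group into a single finite direct product of members of $\mathcal{G}$ and $\mathcal{F}$. The clean way around this is to argue on the level of identities. A group $H$ belongs to $\mathcal{G}$ if and only if it satisfies every group identity valid in $\mathcal{G}$; so it suffices to show that every identity $u = v$ (which we may take in group-reduced form, i.e. as $w = 1$ for $w = uv^{-1}$ in the free group) holding in $\mathcal{G}$ also holds in $H$. Since $\mathcal{F}$ is combinatorial, every group in $\mathcal{F}$ is trivial, so $\mathcal{F}$ satisfies $u = v$ as well (rewritten as a semigroup identity after clearing denominators), whence $\mathcal{G} \vee \mathcal{F}$ satisfies the corresponding semigroup identity, and therefore so does $H$; as $H$ is a group this forces $H$ to satisfy $u = v$. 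Thus $H \in \mathcal{G}$, which is what we needed.

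I would write this up compactly using the direct-product description of joins restricted to finitely generated members together with the fact that a variety is determined by the identities of its finitely generated (indeed, of all) members; the identity-theoretic phrasing above avoids any subtlety about cardinalities and keeps the argument to a few lines.
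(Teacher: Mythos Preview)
Your outline contains a genuine gap in both of the two routes you sketch.

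\emph{The direct-product route.} It is not true that a finitely generated member of $\mathcal G\vee\mathcal F$ \emph{embeds} into a product $A\times B$ with $A\in\mathcal G$ and $B\in\mathcal F$; what is true is that it is a \emph{homomorphic image} of a subsemigroup of such a product. Once a quotient intervenes, your projection argument no longer shows that $H$ embeds into its image in $A$, and the conclusion $H\in\mathcal G$ does not follow from the argument as written.

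\emph{The identity route.} The crucial step ``since every group in $\mathcal F$ is trivial, $\mathcal F$ satisfies $u=v$ as well'' is simply false: $\mathcal F$ has many non-group members, and these need not satisfy the semigroup identity $u=v$. For a concrete example, let $\mathcal G$ be the variety of abelian groups of exponent~2, so that $\mathcal G$ satisfies $x^2=y^2$, and let $\mathcal F=\mathcal{SL}$; then $\mathcal F$ is combinatorial but certainly does not satisfy $x^2=y^2$. Hence you cannot conclude that $u=v$ holds in $\mathcal G\vee\mathcal F$, and your argument stalls.

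What is missing is precisely the construction of a semigroup identity that (i) holds in $\mathcal G$, (ii) holds in \emph{all} of $\mathcal F$, not just in its groups, and (iii) collapses to $u=v$ in any group. The paper does exactly this: from combinatoriality one gets $x^n=x^{n+1}$ in $\mathcal F$ for some $n$, and then the identity $u^{n+1}v^n=u^nv^{n+1}$ holds in $\mathcal F$ (by $x^n=x^{n+1}$) and in $\mathcal G$ (since $u=v$ there), hence in $\mathcal G\vee\mathcal F$; cancelling $u^n$ and $v^n$ in any group yields $u=v$ in $\Gr(\mathcal G\vee\mathcal F)$. Your parenthetical ``after clearing denominators'' seems to gesture at a manipulation of this kind, but you never actually produce the needed identity, and the sentence as written asserts something false.
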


\begin{proof}
The inclusion $\mathcal{G\subseteq\Gr(G\vee F)}$ is evident. To verify the converse inclusion, we suppose that the variety $\mathcal G$ satisfies the identity $u=v$. Being combinatorial, the variety $\mathcal F$ satisfies the identity $x^n=x^{n+1}$ for some natural $n$. Therefore $\mathcal{G\vee F}$ satisfies the identity $u^{n+1}v^n=u^nv^{n+1}$. If we reduce it on $u^n$ from the left and on $v^n$ from the right, we obtain that the identity $u=v$ holds in $\Gr(\mathcal{G\vee F})$.
\end{proof}

A semigroup is called \emph{combinatorial} if all its subgroups are singletons. It is easy to verify that the variety $\mathcal C_m$ is generated by the $(m+1)$-element combinatorial cyclic monoid. We will use this fact below without special references. It may be easily checked that the join of all varieties of the form $\mathcal C_m$ coincides with the variety $\mathcal{COM}$. Therefore, for a periodic semigroup variety $\mathcal X$ there exists the largest number $m\in\mathbb N\cup\{0\}$ with the property $\mathcal C_m\subseteq\mathcal X$. We denote this number by $m(\mathcal X)$. 

\begin{lemma}
\label{m(G+C_m+N)}
If $\mathcal G$ is a periodic group variety, $m\ge0$ and $\mathcal N$ is a nil-variety of semigroups then $m(\mathcal G\vee\mathcal C_m\vee\mathcal N)=m$.
\end{lemma}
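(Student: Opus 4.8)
The plan is to prove the two inequalities $m(\mathcal V)\ge m$ and $m(\mathcal V)\le m$ separately, writing $\mathcal V=\mathcal G\vee\mathcal C_m\vee\mathcal N$. Since $\mathcal G$, $\mathcal C_m$ and $\mathcal N$ are all periodic, so is $\mathcal V$, hence $m(\mathcal V)$ is defined. The lower bound is immediate because $\mathcal C_m\subseteq\mathcal V$ by the very definition of $\mathcal V$, so $m\le m(\mathcal V)$. For the upper bound it suffices to check that $\mathcal C_{m+1}\nsubseteq\mathcal V$: the chain $\mathcal C_0\subset\mathcal C_1\subset\cdots$ then forces $m(\mathcal V)=m$. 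Moreover, since $\mathcal C_{m+1}$ is generated by the $(m+2)$-element combinatorial cyclic monoid $M=\{1,a,\dots,a^{m+1}\}$ (where $a^{m+1}=a^{m+2}$), it is enough to produce a single identity that holds in each of $\mathcal G$, $\mathcal C_m$ and $\mathcal N$ but fails in $M$.

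To build such an identity I would first recall two standard facts: a periodic group variety has finite exponent (its relatively free group on one generator is a torsion cyclic group, hence finite), say $\mathcal G\models x^p=1$; and a nil-variety satisfies $x^{n_0}=0$ for some $n_0$ (its relatively free object on one generator is a monogenic nilsemigroup, hence finite, so $\mathcal N$ is periodic and one applies Lemma~\ref{splitting}(ii) to the resulting identity $x^a=x^{a+b}$). Then I claim the identity
$$x^{m+p}y^{n_0}=x^my^{n_0}$$
(to be read as $x^py^{n_0}=y^{n_0}$ when $m=0$) does the job. It holds in $\mathcal G$ because $\mathcal G\models x^{m+p}=x^m$; it holds in $\mathcal C_m$ because $\mathcal C_m\models x^m=x^{m+1}=\dots=x^{m+p}$; and it holds in $\mathcal N$ because the factor $y^{n_0}$ equals the zero there, so both sides collapse to the zero. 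Hence it holds in $\mathcal V$. On the other hand, substituting $x\mapsto a$ and $y\mapsto 1$ in $M$ turns it into $a^{m+p}=a^m$, which is false since $a^{m+p}=a^{m+1}\ne a^m$. Thus $M\notin\mathcal V$, i.e. $\mathcal C_{m+1}\nsubseteq\mathcal V$, and the proof is complete.

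The computations involved (the reduction $x^{m+p}=x^m$ in $\mathcal G$, the power-collapse in $\mathcal C_m$, the absorption by the zero in $\mathcal N$, and the failure of the identity in $M$) are all routine, as is the handling of the degenerate cases ($\mathcal G$ or $\mathcal N$ trivial, $m=0$), which are covered by the same identity under the stated reading. The step that requires the most thought — and hence the main obstacle — is the \emph{choice} of the identity, in particular keeping the two $x$-exponents equal to $m$ and $m+p$: a naive ``common power identity'' for $\mathcal V$ would have index $\max\{m,n_0,1\}$ rather than $m$, and would therefore only yield $m(\mathcal V)\le\max\{m,n_0,1\}$. The point is that the group part forces the exponent gap to be a multiple of $p$ (so the smaller exponent may be kept at $m$), the nil part is neutralized by multiplying through by $y^{n_0}$, and the power-collapse valid in $\mathcal C_m$ ties the two exponents together at the level $m$ — which is exactly what separates $\mathcal C_{m+1}$ from $\mathcal V$.
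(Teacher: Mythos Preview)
Your proof is correct and follows essentially the same route as the paper. Both arguments construct the identity $x^m y^{n_0}=x^{m+p}y^{n_0}$ (the paper writes it as $x^m y^n = x^{r+m}y^n$), verify it in each of $\mathcal G$, $\mathcal C_m$, $\mathcal N$, and then substitute $y\mapsto 1$ in a combinatorial cyclic monoid to see that it forces $x^m=x^{m+1}$ there; the paper phrases the conclusion as ``every combinatorial monoid in $\mathcal V$ satisfies $x^m=x^{m+1}$'' rather than ``$\mathcal C_{m+1}\nsubseteq\mathcal V$'', but this is the same content.
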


\begin{proof}
The varieties $\mathcal G$ and $\mathcal N$ satisfy, respectively, the identities $x=x^{r+1}$ and $x^n=0$ for some natural $r$ and $n$. Whence the identity $x^my^n=x^{r+m}y^n$ holds in the variety $\mathcal G\vee\mathcal C_m\vee\mathcal N$. Substituting~1 to $y$ in this identity, we see that every monoid in $\mathcal G\vee\mathcal C_m\vee\mathcal N$ satisfies the identity $x^m=x^{r+m}$. Clearly, any combinatorial semigroup with this identity satisfies the identity $x^m=x^{m+1}$. Since every cyclic semigroup is commutative, we have $m(\mathcal G\vee\mathcal C_m\vee\mathcal N)\le m$. The converse unequality is evident.
\end{proof}

\begin{corollary}
\label{m(X+Y)}
If $\mathcal X$ and $\mathcal Y$ are periodic commutative semigroup varieties then $m(\mathcal{X\vee Y})=\max\bigl\{m(\mathcal X),m(\mathcal Y)\bigr\}$.
\end{corollary}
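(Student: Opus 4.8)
The plan is to deduce the statement from Lemmas~\ref{decomposition} and~\ref{m(G+C_m+N)} by a short computation. The inequality $\max\{m(\mathcal X),m(\mathcal Y)\}\le m(\mathcal{X\vee Y})$ is immediate: from $\mathcal C_{m(\mathcal X)}\subseteq\mathcal X\subseteq\mathcal{X\vee Y}$ and the definition of the number $m(\,\cdot\,)$ we get $m(\mathcal X)\le m(\mathcal{X\vee Y})$, and symmetrically $m(\mathcal Y)\le m(\mathcal{X\vee Y})$. So the whole content of the corollary lies in the reverse inequality.

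To prove it, I would first note that a periodic commutative semigroup variety is necessarily distinct from $\mathcal{COM}$ (the latter contains the infinite cyclic semigroup and hence is not periodic), so Lemma~\ref{decomposition} applies to each of $\mathcal X$ and $\mathcal Y$. Write $\mathcal X=\mathcal G_1\vee\mathcal C_k\vee\mathcal N_1$ and $\mathcal Y=\mathcal G_2\vee\mathcal C_l\vee\mathcal N_2$, where the $\mathcal G_i$ are Abelian periodic group varieties, $k,l\ge0$, and the $\mathcal N_i$ are nil-varieties. Lemma~\ref{m(G+C_m+N)} then gives $m(\mathcal X)=k$ and $m(\mathcal Y)=l$.

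Next I would rewrite $\mathcal{X\vee Y}=(\mathcal G_1\vee\mathcal G_2)\vee(\mathcal C_k\vee\mathcal C_l)\vee(\mathcal N_1\vee\mathcal N_2)$. Multiplying the identity $x^k=x^{k+1}$ by a suitable power of $x$ shows that $\mathcal C_k\subseteq\mathcal C_l$ or $\mathcal C_l\subseteq\mathcal C_k$ according as $k\le l$ or $l\le k$; in either case $\mathcal C_k\vee\mathcal C_l=\mathcal C_{\max\{k,l\}}$. Moreover $\mathcal G_1\vee\mathcal G_2$ is again an Abelian periodic group variety and $\mathcal N_1\vee\mathcal N_2$ is again a nil-variety; both facts are standard and follow by passing to a common exponent in the defining identities (the $\mathcal G_i$ jointly satisfy $x^ny=yx^n=y$ and $xy=yx$, the $\mathcal N_i$ jointly satisfy $x^n=0$, for a suitable $n$). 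Hence $\mathcal{X\vee Y}$ has precisely the form to which Lemma~\ref{m(G+C_m+N)} applies, and that lemma yields $m(\mathcal{X\vee Y})=\max\{k,l\}=\max\{m(\mathcal X),m(\mathcal Y)\}$, which completes the argument.

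I do not anticipate any genuine difficulty here: once the two cited lemmas are in hand, the proof is essentially bookkeeping. The only points that need (entirely routine) verification are the closure of the classes of Abelian periodic group varieties and of nil-varieties under binary joins, and the comparability of the varieties $\mathcal C_k$ and $\mathcal C_l$.
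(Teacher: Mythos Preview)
Your proposal is correct and follows essentially the same approach as the paper: decompose $\mathcal X$ and $\mathcal Y$ via Lemma~\ref{decomposition}, use Lemma~\ref{m(G+C_m+N)} to identify $m(\mathcal X)$ and $m(\mathcal Y)$, collapse $\mathcal C_k\vee\mathcal C_l$ to $\mathcal C_{\max\{k,l\}}$, and apply Lemma~\ref{m(G+C_m+N)} once more to the join. Your write-up is in fact slightly more explicit than the paper's in justifying why Lemma~\ref{decomposition} applies (periodicity excludes $\mathcal{COM}$) and why the join of the group parts and of the nil parts remain of the required type.
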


\begin{proof}
Lemma~\ref{decomposition} implies that $\mathcal{X=G}_1\vee\mathcal C_{m_1}\vee\mathcal N_1$ and $\mathcal{Y=G}_2\vee\mathcal C_{m_2}\vee\mathcal N_2$ for some Abelian periodic group varieties $\mathcal G_1$ and $\mathcal G_2$, some $m_1,m_2\ge0$ and some nil-varieties $\mathcal N_1$ and $\mathcal N_2$. Then $m(\mathcal X)=m_1$ and $m(\mathcal Y)=m_2$ by Lemma~\ref{m(G+C_m+N)}. We have
$$\mathcal{X\vee Y=G}_1\vee\mathcal G_2\vee\mathcal C_{m_1}\vee\mathcal C_{m_2}\vee\mathcal N_1\vee\mathcal N_2=\mathcal G_1\vee\mathcal G_2\vee\mathcal C_{\max\{m_1,m_2\}}\vee\mathcal N_1\vee\mathcal N_2\ldotp$$
Applying Lemma~\ref{m(G+C_m+N)} one more time, we have
$$m(\mathcal{X\vee Y})=\max\{m_1,m_2\}=\max\bigl\{m(\mathcal X),m(\mathcal Y)\bigr\}\ldotp$$
Corollary is proved.
\end{proof}

\section{Proofs of main results}
\label{proof}

To prove both theorems, it suffices to verify the implications a)\,$\longrightarrow$\,c) and c)\,$\longrightarrow$\,b) because the implications b)\,$\longrightarrow$\,a) in both theorems are evident.

\medskip

\emph{The implication} a)\,$\longrightarrow$\,c) \emph{of Theorem}~\ref{umod&codistr}. The article~\cite{Vernikov-08a} contains, among others, the proof of the following fact: if a periodic commutative semigroup variety $\mathcal V$ is an upper-modular element of the lattice \textbf{SEM} then one of the claims~(ii) or~(iii) of Theorem~\ref{umod&codistr} holds. Almost all varieties that appear in the corresponding fragment of~\cite{Vernikov-08a} are commutative. The unique exclusion is a periodic group variety $\mathcal G$ that appear in the verification of the following fact: every nil-subvariety of $\mathcal V$ satisfies the identity~\eqref{xxy=xyy}. There are no the requirement that the variety $\mathcal G$ is Abelian in~\cite{Vernikov-08a}. But if we add this requirement to arguments used in~\cite{Vernikov-08a} then the proof will remains valid. Thus, in actual fact, it is verified in~\cite{Vernikov-08a} that if $\mathcal V$ is an upper-modular element of the lattice \textbf{Com} and $\mathcal{V\ne COM}$ then $\mathcal V$ satisfies one of the claims~(ii) or~(iii) of Theorem~\ref{umod&codistr}.

\medskip

\emph{The implication} a)\,$\longrightarrow$\,c) \emph{of Theorem}~\ref{costand}. Let $\mathcal V$ be a modular and upper-modular element of the lattice \textbf{Com} and $\mathcal{V\ne COM}$. Then we may apply Proposition~\ref{mod} and conclude that $\mathcal{V=M\vee N}$ where $\mathcal M$ is one of the varieties $\mathcal T$ or $\mathcal{SL}$, and $\mathcal N$ is a nil-variety. The variety $\mathcal N$ is an upper-modular element in the lattice \textbf{Com} by Lemma~\ref{join with SL}. In view of the proved above implication a)\,$\longrightarrow$\,c) of Theorem~\ref{umod&codistr}, we have that $\mathcal N$ satisfies the identities~\eqref{xxyz=0} and~\eqref{xxy=xyy}. Thus the claim~(ii) of Theorem~\ref{umod&codistr} fullfills. 

\medskip

\emph{The implication} c)\,$\longrightarrow$\,b) \emph{of Theorem}~\ref{costand}. Let $\mathcal{V=M\vee N}$ where $\mathcal M$ is one of the varieties $\mathcal T$ or $\mathcal{SL}$, and $\mathcal N$ is a commutative variety satisfying the identities~\eqref{xxyz=0} and~\eqref{xxy=xyy}. We need to verify that $\mathcal V$ is costandard in \textbf{Com}. In view of Lemma~\ref{join with SL}, it suffices to check that the variety $\mathcal N$ is costandard in \textbf{Com}. Let $\mathcal X$ and $\mathcal Y$ be arbitrary commutative semigroup varieties. It suffices to verify that
$$\mathcal{(N\vee Y)\wedge(X\vee Y)\subseteq(N\wedge X)\vee Y}$$
because the converse inclusion is evident. If at least one of the varieties $\mathcal X$ or $\mathcal Y$ coincides with the variety $\mathcal{COM}$ then the desirable inclusion is evident. Thus we may assume that the varieties $\mathcal X$ and $\mathcal Y$ are periodic. Let $u=v$ be an arbitrary identity that is satisfied by $\mathcal{(N\wedge X)\vee Y}$. We need to verify that it holds in $\mathcal{(N\vee Y)\wedge(X\vee Y)}$. By the hypothesis, the identity $u=v$ holds in $\mathcal Y$ and there exists a deduction of this identity from the identities of the varieties $\mathcal N$ and $\mathcal X$. Let the sequence of words
\begin{equation}
\label{sequence}
u_0\equiv u,u_1,\dots,u_k\equiv v
\end{equation}
be the shortest such deduction. If $k=1$ then the identity $u=v$ holds in one of the varieties $\mathcal N$ or $\mathcal X$. Then it is satisfied by one of the varieties $\mathcal{N\vee Y}$ or $\mathcal{X\vee Y}$, whence by the variety $\mathcal{(N\vee Y)\wedge(X\vee Y)}$. Thus we may assume that $k>1$. If the identity $u=v$ holds in $\mathcal N$ then it holds in $\mathcal{N\vee Y}$ and therefore, in $\mathcal{(N\vee Y)\wedge(X\vee Y)}$. Thus we may assume that $u=v$ fails in $\mathcal N$. In particular, at least one of the words $u$ or $v$, say $u$, does not equal~0 in $\mathcal N$. Since $\mathcal N$ satisfies the identity~\eqref{xxyz=0}, this means that $u$ coincides with one of the words $x_1x_2\cdots x_n$ for some $n$, $x^2$, $x^3$ or $x^2y$. Further considerations are divided into three cases.

\medskip

\emph{Case} 1: $u\equiv x_1x_2\cdots x_n$. If $v\equiv x_{1\pi}x_{2\pi}\cdots x_{n\pi}$ for some non-trivial permutation $\pi$ on the set $\{1,2,\dots,n\}$ then the identity $u=v$ holds in the variety $\mathcal{COM}$ and therefore, in the variety $\mathcal{(N\vee Y)\wedge(X\vee Y)}$. Otherwise, Lemma~\ref{splitting} applies with the conclusion that every nilsemigroup in $\mathcal Y$ satisfies the identity
\begin{equation}
\label{x_1...x_n=0}
x_1x_2\cdots x_n=0\ldotp
\end{equation}
This means that $\mathcal Y$ is a variety of degree $\le n$. Now we may apply Lemma~\ref{commut fin deg} and conclude that $\mathcal Y$ satisfies the identity
$$x_1x_2\cdots x_n=(x_1x_2\cdots x_n)^{r+1}$$
for some natural $r$ and therefore, the identity
$$x_1x_2\cdots x_n=(x_1x_2\cdots x_n)^{r\ell+1}$$
for any natural $\ell$. Thus the words $x_1x_2\cdots x_n$, $(x_1x_2\cdots x_n)^{r\ell+1}$ (for all $\ell$) and $v$ pairwise equal each to other in the variety $\mathcal Y$. 

Further, one of the varieties $\mathcal N$ or $\mathcal X$ satisfies the identity $x_1x_2\cdots x_n=u_1$. If $v\equiv x_{1\pi}x_{2\pi}\cdots x_{n\pi}$ for some non-trivial permutation $\pi$ on the set $\{1,2,\dots,n\}$ then the identity $u=u_1$ holds in both varieties $\mathcal N$ and $\mathcal X$. This contradicts the claim that~\eqref{sequence} is the shortest deduction of the identity $u=v$ from the identities of the varieties $\mathcal N$ and $\mathcal X$. Repeating arguments from the previous paragraph, we may conclude that there exists a natural $s$ such that the words $x_1x_2\cdots x_n$, $(x_1x_2\cdots x_n)^{s\ell+1}$ (for all $\ell$) and $v$ pairwise equal each to other in one of the varieties $\mathcal N$ or $\mathcal X$. Then the words $x_1x_2\cdots x_n$, $(x_1x_2\cdots x_n)^{rs+1}$ and $v$ pairwise equal each to other in one of the varieties $\mathcal{N\vee Y}$ or $\mathcal{X\vee Y}$ and therefore, in the variety $\mathcal{(N\vee Y)\wedge(X\vee Y)}$. In particular, the variety $\mathcal{(N\vee Y)\wedge(X\vee Y)}$ satisfies the identity $u=v$.

\medskip

\emph{Case} 2: $u\equiv x^2$ or $u\equiv x^3$. One can verify the desirable statement in slightly more general situation when $u\equiv x^n$ for some $n$. (In actual fact, this statement is evident whenever $n>3$ because the variety $\mathcal N$ satisfies the identity $x^4=0$. But our considerations below does not depend on $n$.) The identity $x^n=v$ holds in $\mathcal Y$. Then Lemma~\ref{splitting} implies that every nilsemigroup in $\mathcal Y$ satisfies the identity $x^n=0$. Being periodic, the variety $\mathcal Y$ satisfies the identity $x^p=x^q$ for some natural $p$ and $q$ with $p<q$. Let $p$ be the least number with such a property. In view of Lemma~\ref{splitting}, each nilsemigroup in $\mathcal Y$ satisfies the identity $x^p=0$. Clearly, $p$ is the least number with such a property. Therefore $n\ge p$. Multiplying the identity $x^p=x^q$ on $x^{n-p}$, we see that $\mathcal Y$ satisfies the identity $x^n=x^{n+r}$ for some $r$ and therefore, the identity $x^n=x^{n+r\ell}$ for every natural $\ell$. Thus the words $x^n$, $x^{n+r\ell}$ (for all $\ell$) and $v$ pairwise equal each to other in the variety $\mathcal Y$. 

Further, one of the varieties $\mathcal N$ or $\mathcal X$ satisfies the identity $x^n=u_1$. The same arguments as in the previous paragraph show that there exists a natural $s$ such that the words $x^n$, $x^{n+s\ell}$ (for all $\ell$) and $v$ pairwise equal each to other in one of the varieties $\mathcal N$ or $\mathcal X$. Then the words $x^n$, $x^{n+rs}$ and $v$ pairwise equal each to other in one of the varieties $\mathcal{N\vee Y}$ or $\mathcal{X\vee Y}$ and therefore, in the variety $\mathcal{(N\vee Y)\wedge(X\vee Y)}$. In particular, the variety $\mathcal{(N\vee Y)\wedge(X\vee Y)}$ satisfies the identity $u=v$.

\medskip

\emph{Case} 3: $u\equiv x^2y$. It is well known that every periodic semigroup variety $\mathcal W$ contains the greatest nil-subvariety. We denote this subvariety by $\Nil(\mathcal W)$. In view of Lemma~\ref{decomposition}, $\mathcal{X=G}_1\vee\mathcal C_{m_1}\vee\mathcal N_1$ and $\mathcal{Y=G}_2\vee\mathcal C_{m_2}\vee\mathcal N_2$ for some Abelian periodic group varieties $\mathcal G_1$ and $\mathcal G_2$, some $m_1,m_2\ge0$ and some nil-varieties $\mathcal N_1$ and $\mathcal N_2$. We may assume without loss of generality that $\mathcal N_1=\Nil(\mathcal X)$ and $\mathcal N_2=\Nil(\mathcal Y)$.  

If the variety $\mathcal N$ satisfies the identity~\eqref{xxy=0} then
$$\mathcal{(N\wedge X)\vee Y=(N\vee Y)\wedge(X\vee Y)}$$
by Proposition~\ref{neutr}, and we are done. Suppose now that the identity~\eqref{xxy=0} fails in $\mathcal N$.

Recall that~\eqref{sequence} is the shortest deduction of the identity $u=v$ from the identities of the varieties $\mathcal N$ and $\mathcal X$. Hence, for every $i=0,1,\dots,k-1$, the identity $u_i=u_{i+1}$ is false in $\mathcal{COM}$. This allows us to suppose that if $u_i$ is a word of length~3 depending on letters $x$ and $y$ then $u_i\in\{x^2y,xy^2\}$. Put $S=\{x^2y,xy^2\}$. The words $u_0$, $u_1$, \dots, $u_k$ are pairwise distinct, whence at most two of them lie in $S$. Recall that $u_0\equiv x^2y\in S$. The identity $u=u_1$ is satisfied by one of the varieties $\mathcal N$ or $\mathcal X$. If it holds in $\mathcal N$ and $u_1\notin S$ then Corollary~\ref{xxy=w} applies with the conclusion that $\mathcal N$ satisfies the identity~\eqref{xxy=0}. But this is not the case. Further, if the identity $u=u_1$ holds in $\mathcal X$ and $u_1\in S$ then the identity $u_1=u_2$ holds in $\mathcal N$ and $u_2\notin S$. Then Corollary~\ref{xxy=w} applies again and we conclude that the variety $\mathcal N$ satisfies the identity~\eqref{xxy=0}. As we have already noted, this is not the case. Thus either the identity $u=u_1$ holds in $\mathcal N$ and $u_1\in S$ or this identity holds in $\mathcal X$ and $u_1\notin S$. Note that $u_2\notin S$ in the first case because  $u_0,u_1\in S$ here. In both the cases, there exists an identity of the form $w_1=w_2$ such that $w_1\in S$, $w_2\notin S$ and the identity holds in $\mathcal X$ (namely, the identity $u_1=u_2$ in the first case, and the identity $u=u_1$ in the second one). Corollary~\ref{xxy=w} shows that $\mathcal N_1$ satisfies the identity~\eqref{xxy=0}. According to Proposition~\ref{neutr}, this implies that the variety $\mathcal N_1$ is neutral in \textbf{Com}. We use this fact below without special references.

By the hypothesis, the identity $x^2y=v$ holds in the variety $\mathcal Y$. Then Corollary~\ref{xxy=w} implies that either the variety $\mathcal N_2$ satisfies the identity~\eqref{xxy=0} or $v\in W$. In the second  case, the identity $x^2y=v$ is equivalent to~\eqref{xxy=xyy} because it fails in the variety $\mathcal{COM}$. Thus either $\mathcal N_2$ satisfies the identity~\eqref{xxy=0} or $\mathcal Y$ satisfies the identity~\eqref{xxy=xyy}. Consider the second case. Corollary~\ref{xxy=xyy implies xxyz=0} implies that the identity~\eqref{xxyz=0} holds in $\mathcal N_2$ in this case. Besides that, substituting~1 to $y$ in~\eqref{xxy=xyy}, we have that every monoid in $\mathcal Y$ is a band (in particular, each group in $\mathcal Y$ is singleton). We see that $\mathcal G_2=\mathcal T$ and $m_2\le1$ in the considerable case. Thus either $\mathcal N_2$ satisfies the identity~\eqref{xxy=0} or $\mathcal{Y=M\vee N}_2$ where $\mathcal M$ is one of the varieties $\mathcal T$ or $\mathcal{SL}$.

Put $\mathcal Z_1=\mathcal{(N\wedge X)\vee Y}$ and $\mathcal Z_2=\mathcal{(N\vee Y)\wedge(X\vee Y)}$. In view of Lemma~\ref{decomposition}, it suffices to verify that $\Gr(\mathcal Z_1)=\Gr(\mathcal Z_2)$, $m(\mathcal Z_1)=m(\mathcal Z_2)$ and $\Nil(\mathcal Z_1)=\Nil(\mathcal Z_2)$. Clearly, the variety $\mathcal C_m\vee\mathcal U$ is combinatorial whenever $m\ge0$ and $\mathcal U$ is an arbitrary nil-variety. Using Lemma~\ref{Gr of join}, we have
\begin{align*}
\Gr(\mathcal Z_1)&=\Gr\bigl(\mathcal G_2\vee\mathcal C_{m_2}\vee\mathcal N_2\vee(\mathcal{N\wedge X)\bigr)=G}_2,\\
\Gr(\mathcal Z_2)&=\Gr\bigl(\mathcal{(N\vee Y)\wedge(X\vee Y)}\bigr)=\mathcal{\Gr(N\vee Y)\wedge\Gr(X\vee Y)}\\
&=\Gr(\mathcal G_2\vee\mathcal C_{m_2}\vee\mathcal N_2\vee\mathcal N)\wedge\Gr(\mathcal G_1\vee\mathcal G_2\vee\mathcal C_{m_1}\vee\mathcal C_{m_2}\vee\mathcal N_1\vee\mathcal N_2)\\
&=\mathcal G_2\wedge(\mathcal G_1\vee\mathcal G_2)=\mathcal G_2\ldotp
\end{align*}
Thus $\Gr(\mathcal Z_1)=\Gr(\mathcal Z_2)$. Further, using Lemma~\ref{m(G+C_m+N)}, we have
\begin{align*}
m(\mathcal Z_1)&=m\bigl(\mathcal{(N\wedge X)\vee Y}\bigr)=m\bigl(\mathcal G_2\vee\mathcal C_{m_2}\vee\mathcal N_2\vee(\mathcal{N\wedge X})\bigr)=m_2,\\
m(\mathcal Z_2)&=m\bigl(\mathcal{(N\vee Y)\wedge(X\vee Y)}\bigr)=\min\bigl\{m(\mathcal{N\vee Y}),m(\mathcal{X\vee Y})\bigr\}\\
&=\min\bigl\{m(\mathcal G_2\vee\mathcal C_{m_2}\vee\mathcal N_2\vee\mathcal N),m(\mathcal G_1\vee\mathcal G_2\vee\mathcal C_{m_1}\vee\mathcal C_{m_2}\vee\mathcal N_1\vee\mathcal N_2)\bigr\}\\
&=\min\bigl\{m(\mathcal G_2\vee\mathcal C_{m_2}\vee\mathcal N_2\vee\mathcal N),m(\mathcal G_1\vee\mathcal G_2\vee\mathcal C_{\max\{m_1,m_2\}}\vee\mathcal N_1\vee\mathcal N_2)\bigr\}\\
&=\min\bigl\{m_2,\max\{m_1,m_2\}\bigr\}=m_2\ldotp
\end{align*}
Thus $m(\mathcal Z_1)=m(\mathcal Z_2)$.

It remains to check that $\Nil(\mathcal Z_1)=\Nil(\mathcal Z_2)$. Put
$$\mathcal I=\var\{x^2y=xy^2,\,x^2yz=0,\,xy=yx\}\ldotp$$
As we have seen above, the varieties $\mathcal N_1$ and $\mathcal N_2$ satisfy the identity~\eqref{xxy=xyy} and therefore, the identity~\eqref{xxyz=0} (see Corollary~\ref{xxy=xyy implies xxyz=0}). In other words, $\mathcal N_1,\mathcal N_2\subseteq\mathcal I$. Simple calculations based on Lemma~\ref{splitting} show that proper subvarieties of the variety $\mathcal I$ are exhausted by the following varieties:
\begin{align*}
\mathcal I_n&=\var\{x^2yz=x_1x_2\cdots x_n=0,\,x^2y=xy^2,\,xy=yx\}\enskip\text{where }n\ge4,\\
\mathcal J&=\var\{x^2yz=x^3=0,\,x^2y=xy^2,\,xy=yx\},\\
\mathcal J_n&=\var\{x^2yz=x^3=x_1x_2\cdots x_n=0,\,x^2y=xy^2,\,xy=yx\}\enskip\text{where }n\ge4,\\
\mathcal K&=\var\{x^2y=0,\,xy=yx\},\\
\mathcal K_n&=\var\{x^2y=x_1x_2\cdots x_n=0,\,xy=yx\}\enskip\text{where }n\ge3,\\
\mathcal L&=\var\{x^2=0,\,xy=yx\},\\
\mathcal L_n&=\var\{x^2=x_1x_2\cdots x_n=0,\,xy=yx\}\enskip\text{where }n\in\mathbb N\ldotp
\end{align*}
This implies that the lattice $L(\mathcal I)$ has the form shown on Fig.~\ref{L(I)}.

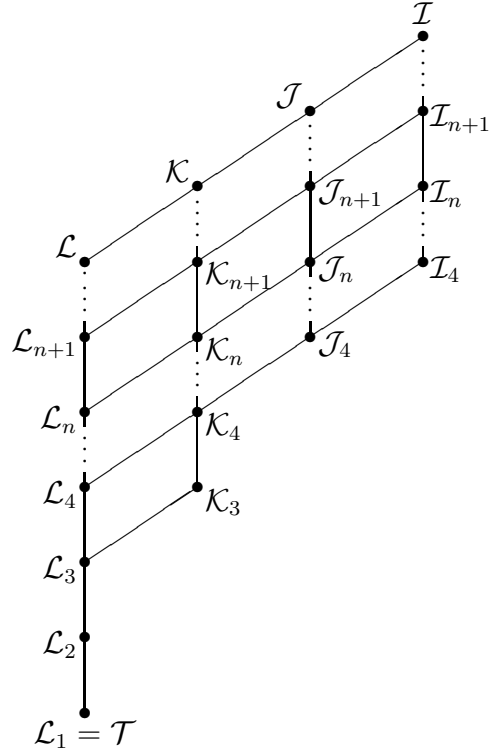
\begin{figure}[tbh]
\begin{center}
\unitlength=1mm
\linethickness{0.4pt}
\begin{picture}(51,98)
\put(5,5){\line(0,1){32}}
\put(5,25){\line(3,2){15}}
\put(5,35){\line(3,2){45}}
\put(5,43){\line(0,1){14}}
\put(5,45){\line(3,2){45}}
\put(5,55){\line(3,2){45}}
\put(5,65){\line(3,2){45}}
\put(20,35){\line(0,1){12}}
\put(20,53){\line(0,1){14}}
\put(35,55){\line(0,1){2}}
\put(35,63){\line(0,1){14}}
\put(50,65){\line(0,1){2}}
\put(50,73){\line(0,1){14}}
\put(5,5){\circle*{1.33}}
\put(5,15){\circle*{1.33}}
\put(5,25){\circle*{1.33}}
\put(5,35){\circle*{1.33}}
\put(5,45){\circle*{1.33}}
\put(5,55){\circle*{1.33}}
\put(5,65){\circle*{1.33}}
\put(20,35){\circle*{1.33}}
\put(20,45){\circle*{1.33}}
\put(20,55){\circle*{1.33}}
\put(20,65){\circle*{1.33}}
\put(20,75){\circle*{1.33}}
\put(35,55){\circle*{1.33}}
\put(35,65){\circle*{1.33}}
\put(35,75){\circle*{1.33}}
\put(35,85){\circle*{1.33}}
\put(50,65){\circle*{1.33}}
\put(50,75){\circle*{1.33}}
\put(50,85){\circle*{1.33}}
\put(50,95){\circle*{1.33}}
\put(5,38.5){\circle*{0.52}}
\put(5,40){\circle*{0.52}}
\put(5,41.5){\circle*{0.52}}
\put(5,58.5){\circle*{0.52}}
\put(5,60){\circle*{0.52}}
\put(5,61.5){\circle*{0.52}}
\put(5,63.5){\circle*{0.52}}
\put(20,48.5){\circle*{0.52}}
\put(20,50){\circle*{0.52}}
\put(20,51.5){\circle*{0.52}}
\put(20,68.5){\circle*{0.52}}
\put(20,70){\circle*{0.52}}
\put(20,71.5){\circle*{0.52}}
\put(20,73){\circle*{0.52}}
\put(35,58.5){\circle*{0.52}}
\put(35,60){\circle*{0.52}}
\put(35,61.5){\circle*{0.52}}
\put(35,78.5){\circle*{0.52}}
\put(35,80){\circle*{0.52}}
\put(35,81.5){\circle*{0.52}}
\put(35,83){\circle*{0.52}}
\put(50,68.5){\circle*{0.52}}
\put(50,70){\circle*{0.52}}
\put(50,71.5){\circle*{0.52}}
\put(50,88.5){\circle*{0.52}}
\put(50,90){\circle*{0.52}}
\put(50,91.5){\circle*{0.52}}
\put(50,93){\circle*{0.52}}
\put(50,98){\makebox(0,0)[cc]{$\mathcal I$}}
\put(51,64){\makebox(0,0)[lc]{$\mathcal I_4$}}
\put(51,74){\makebox(0,0)[lc]{$\mathcal I_n$}}
\put(51,84){\makebox(0,0)[lc]{$\mathcal I_{n+1}$}}
\put(34,87){\makebox(0,0)[rc]{$\mathcal J$}}
\put(36,54){\makebox(0,0)[lc]{$\mathcal J_4$}}
\put(36,64){\makebox(0,0)[lc]{$\mathcal J_n$}}
\put(36,74){\makebox(0,0)[lc]{$\mathcal J_{n+1}$}}
\put(19,77){\makebox(0,0)[rc]{$\mathcal K$}}
\put(21,33){\makebox(0,0)[lc]{$\mathcal K_3$}}
\put(21,43){\makebox(0,0)[lc]{$\mathcal K_4$}}
\put(21,53){\makebox(0,0)[lc]{$\mathcal K_n$}}
\put(21,63){\makebox(0,0)[lc]{$\mathcal K_{n+1}$}}
\put(4,67){\makebox(0,0)[rc]{$\mathcal L$}}
\put(5,2){\makebox(0,0)[cc]{$\mathcal L_1=\mathcal T$}}
\put(4,14){\makebox(0,0)[rc]{$\mathcal L_2$}}
\put(4,24){\makebox(0,0)[rc]{$\mathcal L_3$}}
\put(4,34){\makebox(0,0)[rc]{$\mathcal L_4$}}
\put(4,44){\makebox(0,0)[rc]{$\mathcal L_n$}}
\put(4,54){\makebox(0,0)[rc]{$\mathcal L_{n+1}$}}
\end{picture}
\caption{The lattice $L(\mathcal I)$}
\label{L(I)}
\end{center}
\end{figure}

Identities of the form $w=0$ are called 0-\emph{reduced}. For a commutative nil-variety of semigroups $\mathcal V$, we denote by $\ZR(\mathcal V)$ the variety given by the commutative law and all 0-reduced identities that hold in $\mathcal V$. The exponent of a periodic group variety $\mathcal H$ is denoted by $\exp(\mathcal H)$. To verify the equality $\Nil(\mathcal Z_1)=\Nil(\mathcal Z_2)$, we need two auxiliary facts.

\begin{lemma}
\label{Nil(G+C_m+U)}
Let $\mathcal G$ be a periodic group variety and $\mathcal U$ be a nil-variety of semigroups with $\mathcal{U\subseteq I}$ and $\mathcal U\supseteq\Nil(\mathcal C_m)$ for some $m\le2$. Then
\begin{itemize}
\item[\textup{a)}]$\Nil(\mathcal{G\vee\mathcal C}_m\vee\mathcal{U)\subseteq\ZR(U)}$;
\item[\textup{b)}]if $\mathcal{U\subseteq K}$ then $\Nil(\mathcal{G\vee C}_m\vee\mathcal{U)=U}$.
\end{itemize}
\end{lemma}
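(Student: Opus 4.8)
The plan is to establish a) by a direct syntactic argument in which the hypotheses $m\le2$ and $\mathcal U\supseteq\Nil(\mathcal C_m)$ are used in an essential way, and then to obtain b) from a) together with the description of $L(\mathcal I)$ in Fig.~\ref{L(I)}. Put $\mathcal W=\mathcal G\vee\mathcal C_m\vee\mathcal U$ and $e=\exp(\mathcal G)$. By definition $\ZR(\mathcal U)$ is given by the commutative law and by all identities $w=0$ holding in $\mathcal U$; since $\Nil(\mathcal W)$ is commutative (it is contained in the commutative variety $\mathcal W$), part a) reduces to proving that $\Nil(\mathcal W)\models w=0$ whenever $\mathcal U\models w=0$. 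The idea is to produce, for such $w$, an identity of $\mathcal W$ in which $w$ is set equal to a word containing an arbitrarily long power of a single letter; any nilsemigroup kills that power, hence kills $w$.

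\emph{Case $m\le1$.} Here $\mathcal C_m\in\{\mathcal T,\mathcal{SL}\}$, so $\mathcal C_m\models v=v^{t+1}$ for every word $v$ and every $t$. Hence $\mathcal W\models w=w^{e+1}$: this holds in $\mathcal G$ because $w^e$ evaluates to an identity element of a group of exponent $e$, in $\mathcal C_m$ by the previous sentence, and in $\mathcal U$ because $w=0$ there. Iterating yields $\mathcal W\models w=w^{je+1}$ for all $j$. If $S\in\mathcal W$ is a nilsemigroup and $b$ is the value of $w$ under some substitution, then $b=b^{je+1}$ for all $j$, and since $b^N=0$ for some $N$, taking $je+1\ge N$ gives $b=0$. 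Therefore $\Nil(\mathcal W)\models w=0$.

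\emph{Case $m=2$.} By hypothesis $\mathcal U\supseteq\Nil(\mathcal C_2)=\mathcal L=\var\{x^2=0,\,xy=yx\}$ (the equality is a routine check: a nilsemigroup in $\mathcal C_2$ satisfies $x^2=x^3=\cdots=0$, and conversely $\mathcal L\subseteq\mathcal C_2$). Since $\mathcal L$ satisfies no identity $v=0$ with $v$ a linear word, while $\mathcal U\models w=0$, the word $w$ must contain some letter $x$ of multiplicity $k\ge2$. I claim $\mathcal W\models w=wx^e$: in $\mathcal G$ the appended $x^e$ is an identity element; in $\mathcal U$ both sides equal $0$; and in $\mathcal C_m$, which is generated by $\langle a\mid a^m=a^{m+1}\rangle$, under any substitution of the letters by powers of $a$ the value of $w$ is $a^0$ if $x\mapsto a^0$ (and then appending $x^e$ changes nothing), while otherwise this value is a power $a^j$ with $j\ge k\ge m$ — here $m\le2\le k$ is used — so $a^j=a^m$ and again appending $x^e$ changes nothing. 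Iterating, $\mathcal W\models w=wx^{je}$ for all $j$; and in a nilsemigroup $S\in\mathcal W$, if $s$ is the image of $x$ under a substitution and $s^N=0$, then the value of $wx^{je}$, hence of $w$, is $0$ once $je\ge N$. Thus $\Nil(\mathcal W)\models w=0$, which completes a).

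For b), suppose $\mathcal U\subseteq\mathcal K$. By a), $\Nil(\mathcal W)\subseteq\ZR(\mathcal U)$. Because $\mathcal K$ satisfies~\eqref{xxy=0}, which fails in $\mathcal I$, the variety $\mathcal U$ is a proper subvariety of $\mathcal I$, so by Fig.~\ref{L(I)} it is one of $\mathcal K$, the $\mathcal K_n$, $\mathcal L$, the $\mathcal L_n$; each of these is given by $0$-reduced identities and the commutative law, so $\ZR(\mathcal U)=\mathcal U$. Hence $\Nil(\mathcal W)\subseteq\mathcal U$, while $\mathcal U\subseteq\Nil(\mathcal W)$ is clear as $\mathcal U$ is a nil-variety inside $\mathcal W$. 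I expect the delicate point to be the case $m=2$ of a): on the one hand one must check that $w=wx^e$ genuinely holds in $\mathcal C_m$ (which is exactly what restricts $m$ to be $\le2$), and on the other hand one must first rule out linear $w$, which is the sole role played by the assumption $\mathcal U\supseteq\Nil(\mathcal C_m)$.
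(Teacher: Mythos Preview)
Your proof is correct and arrives at the same conclusion, but the organization differs from the paper's. The paper case-splits on the form of the word $w$: using the hypothesis $\mathcal U\subseteq\mathcal I$ it first observes that (up to commutativity) $w$ must be one of $x^2yz$, $x^2y$, $x^3$, $x^2$, or $x_1x_2\cdots x_n$, then shows that $\mathcal G\vee\mathcal C_m\vee\mathcal U\models x^rw=w$ in the non-linear cases and $\mathcal G\vee\mathcal C_m\vee\mathcal U\models x_1\cdots x_n=x_1^{r+1}x_2\cdots x_n$ in the linear case (after arguing $m\le1$ there), and finishes with Lemma~\ref{splitting}(ii). You instead case-split on $m$, using $w=w^{e+1}$ for $m\le1$ and $w=wx^e$ for $m=2$. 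Your route has the pleasant feature that part~a) never invokes the hypothesis $\mathcal U\subseteq\mathcal I$; that hypothesis is used only in part~b), exactly as the paper does. The paper's route, on the other hand, makes the appeal to Lemma~\ref{splitting}(ii) more transparent and avoids arguing inside a specific generating monoid.

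One small slip to fix: in the $m=2$ case you write ``the value of $w$ is $a^0$ if $x\mapsto a^0$'', which is false in general (e.g.\ $w\equiv x^2y$ with $x\mapsto1$, $y\mapsto a$ gives value $a$). What you need---and what your parenthetical already says---is simply that if $x\mapsto1$ then $x^e\mapsto1$, so $w$ and $wx^e$ take the same value regardless of what that value is. With that sentence corrected, the argument goes through.
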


\begin{proof}
a) Put $\mathcal{Z=G\vee C}_m\vee\mathcal U$. Let $w=0$ be an arbitrary 0-reduced identity that holds in the variety $\mathcal U$. Because $\mathcal{U\subseteq I}$, we have that $w$ is one of the words $x^2yz$, $x^2y$, $x^3$, $x^2$ or $x_1x_2\cdots x_n$ for some natural $n$ (see Fig.~\ref{L(I)}). Put $r=\exp(\mathcal G)$. If $w\in\{x^2yz,x^2y,x^3,x^2\}$ then the variety $\mathcal Z$ satisfies the identity $x^rw=w$. Then Lemma~\ref{splitting}(ii) applies with the conclusion that the identity $w=0$ holds in the variety $\Nil(\mathcal Z)$. Suppose now that $w\equiv x_1x_2\cdots x_n$. In other words, $\mathcal U$ satisfies the identity~\eqref{x_1...x_n=0}. Because $\mathcal{\Nil(Z)\supseteq\Nil(C}_m)$ and the variety $\Nil(\mathcal C_m)$ with $m>1$ does not satisfy the identity~\eqref{x_1...x_n=0}, we have that $m\le1$ in this case. Then the variety $\mathcal Z$ satisfies the identity $x_1x_2\cdots x_n=x_1^{r+1}x_2\cdots x_n$. Using Lemma~\ref{splitting}(ii) again, we have that the variety $\Nil(\mathcal Z)$ satisfies the identity~\eqref{x_1...x_n=0}. We see that if a 0-reduced identity holds in $\mathcal U$ then it holds in $\Nil(\mathcal Z)$ as well. We prove that $\mathcal{\Nil(Z)\subseteq\ZR(U)}$.

\medskip

b) Let now $\mathcal{U\subseteq K}$. All subvarieties of the variety $\mathcal K$ is given within $\mathcal{COM}$ by 0-reduced identities only (see Fig.~\ref{L(I)}). Therefore $\mathcal{\ZR(U)=U}$. Now the claim~a) implies that $\Nil(\mathcal{G\vee C}_m\vee\mathcal{U)\subseteq U}$. The converse inclusion is evident.
\end{proof}

\begin{lemma}
\label{eliminate ZR}
If $\mathcal U_1,\mathcal U_2\subseteq\mathcal I$ then $\ZR(\mathcal U_1)\wedge\mathcal U_2=\mathcal U_1\wedge\mathcal U_2$.
\end{lemma}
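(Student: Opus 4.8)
The plan is to establish the nontrivial inclusion $\ZR(\mathcal U_1)\wedge\mathcal U_2\subseteq\mathcal U_1\wedge\mathcal U_2$ by a membership argument, the reverse inclusion being immediate from $\mathcal U_1\subseteq\ZR(\mathcal U_1)$. Everything will rest on a single structural fact already visible in the description of $L(\mathcal I)$ obtained above (see Fig.~\ref{L(I)}): reading off the defining systems of the varieties $\mathcal I$, $\mathcal I_n$, $\mathcal J$, $\mathcal J_n$, $\mathcal K$, $\mathcal K_n$, $\mathcal L$, $\mathcal L_n$, one sees that \emph{every} subvariety of $\mathcal I$ can be given, within $\mathcal{COM}$, by a set of $0$-reduced identities together with, possibly, the single non-$0$-reduced identity~\eqref{xxy=xyy} (indeed, in each of those systems the commutative law is the only non-$0$-reduced identity, apart from~\eqref{xxy=xyy} occurring in the systems for $\mathcal I$, $\mathcal I_n$, $\mathcal J$, $\mathcal J_n$). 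Note also that $\mathcal I$, and hence each of $\mathcal U_1$, $\mathcal U_2$, satisfies~\eqref{xxy=xyy}.

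With this in hand the argument is short. I would take an arbitrary semigroup $S$ lying in $\ZR(\mathcal U_1)\wedge\mathcal U_2$, so that $S\in\ZR(\mathcal U_1)$ and $S\in\mathcal U_2$, and verify that $S\in\mathcal U_1$. Since $S\in\mathcal U_2\subseteq\mathcal I$, the semigroup $S$ is commutative and satisfies~\eqref{xxy=xyy}. Fix a defining system $\Sigma$ of $\mathcal U_1$ within $\mathcal{COM}$ of the form described in the previous paragraph; every identity in $\Sigma$ other than~\eqref{xxy=xyy} is $0$-reduced and holds in $\mathcal U_1$, hence holds in $\ZR(\mathcal U_1)$ and therefore in $S$. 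As $S$ is commutative and satisfies~\eqref{xxy=xyy} as well, $S$ satisfies all the defining identities of $\mathcal U_1$, whence $S\in\mathcal U_1$. Thus $S$ belongs to both $\mathcal U_1$ and $\mathcal U_2$, i.e. $S\in\mathcal U_1\wedge\mathcal U_2$, and the desired inclusion follows.

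I do not expect a genuine obstacle here: the substantive work — computing the lattice $L(\mathcal I)$ and the identity bases of its members — has already been carried out, and what remains is bookkeeping. The two points that require attention are keeping the inclusions pointing the right way (recall $\mathcal U_1\subseteq\ZR(\mathcal U_1)$, so $\mathcal U_1\wedge\mathcal U_2$ is the smaller variety) and the observation, used crucially, that $\mathcal U_2\subseteq\mathcal I$ already forces the commutative law and~\eqref{xxy=xyy}, so that in testing membership of $S$ in $\mathcal U_1$ only the $0$-reduced part of a basis of $\mathcal U_1$ needs to be checked — and that part is supplied precisely by $S\in\ZR(\mathcal U_1)$.
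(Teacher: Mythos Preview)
Your proposal is correct and is essentially the same argument as the paper's, just phrased at the level of semigroups rather than lattices. The paper sets $\mathcal Q=\var\{x^2y=xy^2,\,xy=yx\}$, observes from Fig.~\ref{L(I)} that $\mathcal U_1=\mathcal Q\wedge\ZR(\mathcal U_1)$ (exactly your structural fact that a basis of $\mathcal U_1$ within $\mathcal{COM}$ consists of $0$-reduced identities together with~\eqref{xxy=xyy}), and then uses $\mathcal U_2\subseteq\mathcal I\subseteq\mathcal Q$ to absorb $\mathcal Q$ into $\mathcal U_2$, giving $\mathcal U_1\wedge\mathcal U_2=\mathcal Q\wedge\ZR(\mathcal U_1)\wedge\mathcal U_2=\ZR(\mathcal U_1)\wedge\mathcal U_2$ in one line; your membership argument unpacks precisely this computation.
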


\begin{proof}
Put $\mathcal Q=\var\{x^2y=xy^2,\,xy=yx\}$. Then $\mathcal U_1=\mathcal{Q\wedge\ZR(U}_1)$ (see Fig.~\ref{L(I)}) and $\mathcal U_2\subseteq\mathcal{I\subseteq Q}$. Therefore $\mathcal U_1\wedge\mathcal U_2=\mathcal{Q\wedge\ZR(U}_1)\wedge\mathcal U_2=\ZR(\mathcal U_1)\wedge\mathcal U_2$.
\end{proof}

Now we start with the proof of the equality $\Nil(\mathcal Z_1)=\Nil(\mathcal Z_2)$. Note that if $m>2$ then the variety $\Nil(\mathcal C_m)=\var\{x^m=0,\,xy=yx\}$ does not satisfy the identity~\eqref{xxyz=0}. Since $\Nil(\mathcal C_{m_1})\subseteq\mathcal N_1\subseteq\mathcal X$ and $\Nil(\mathcal C_{m_2})\subseteq\mathcal N_2\subseteq\mathcal Y$, we have $m_1,m_2\le2$. Below we use this fact without special references.

Further, we note that $\mathcal{N\wedge X=N\wedge\Nil(X)=N\wedge N}_1$, whence
\begin{equation}
\label{Z_1}
\mathcal Z_1=\mathcal{(N\wedge N}_1)\vee\mathcal Y\ldotp
\end{equation} 
Suppose at first that the variety $\mathcal N_2$ satisfies the identity~\eqref{xxy=0}. Using the equality~\eqref{Z_1}, we have
$$\mathcal Z_1=\mathcal{(N\wedge N}_1)\vee\mathcal{Y=(N\wedge N}_1)\vee\mathcal N_2\vee\mathcal G_2\vee\mathcal C_{m_2}$$
where $m_2\le2$. Recall that $\mathcal N_1$ satisfies the identity~\eqref{xxy=0}. Now Lemma~\ref{Nil(G+C_m+U)}b) with $\mathcal {U=(N\wedge N}_1)\vee\mathcal N_2$, $\mathcal{G=G}_2$ and $m=m_2$ applies with the conclusion that
\begin{equation}
\label{summary1}
\Nil(\mathcal Z_1)=(\mathcal{N\wedge N}_1)\vee\mathcal N_2\ldotp
\end{equation}
Applying Proposition~\ref{neutr}, we have
\begin{equation}
\label{summary2}
\Nil(\mathcal Z_1)=(\mathcal{N\vee N}_2)\wedge(\mathcal N_1\vee\mathcal N_2)\ldotp
\end{equation}
One can consider the variety $\Nil(\mathcal Z_2)$ now. Since $\mathcal Z_2=\mathcal{(N\vee Y)\wedge(X\vee Y)}$, we have
\begin{equation}
\label{Nil(Z_2)}
\Nil(\mathcal Z_2)=\mathcal{\Nil(N\vee Y)\wedge\Nil(X\vee Y)}\ldotp
\end{equation}
Further, $\mathcal{\Nil(N\vee Y)=\Nil(N\vee N}_2\vee\mathcal G_2\vee\mathcal C_{m_2})$. Now we may apply Lemma~\ref{Nil(G+C_m+U)}a) with $\mathcal{U=N\vee N}_2$, $\mathcal{G=G}_2$ and $m=m_2$, and conclude that
$$\mathcal{\Nil(N\vee Y)\subseteq\ZR(N\vee N}_2)\ldotp$$
On the other hand,
$$\mathcal{X\vee Y=G}_1\vee\mathcal G_2\vee\mathcal C_{m_1}\vee\mathcal C_{m_2}\vee\mathcal N_1\vee\mathcal N_2=\mathcal G_1\vee\mathcal G_2\vee\mathcal C_{\max\{m_1,m_2\}}\vee\mathcal N_1\vee\mathcal N_2\ldotp$$
Now Lemma~\ref{Nil(G+C_m+U)}b) with $\mathcal{U=N}_1\vee\mathcal N_2$, $\mathcal{G=G}_1\vee\mathcal G_2$ and $m=\max\{m_1,m_2\}$ applies with the conclusion that $\mathcal{\Nil(X\vee Y)=N}_1\vee\mathcal N_2$. Thus
$$\Nil(\mathcal Z_2)=\mathcal{\Nil(N\vee Y)\wedge\Nil(X\vee Y)\subseteq\ZR(N\vee N}_2)\wedge(\mathcal N_1\vee\mathcal N_2)\ldotp$$
By the hypothesis, $\mathcal{N\subseteq I}$. Now Lemma~\ref{eliminate ZR} with $\mathcal U_1=\mathcal{N\vee N}_2$ and $\mathcal U_2=\mathcal N_1\vee\mathcal N_2$ may be applied with the conclusion that
$$\Nil(\mathcal Z_2)\subseteq(\mathcal{N\vee N}_2)\wedge(\mathcal N_1\vee\mathcal N_2)\ldotp$$
Because the converse inclusion is evident, we have
\begin{equation}
\label{summary3}
\Nil(\mathcal Z_2)=(\mathcal{N\vee N}_2)\wedge(\mathcal N_1\vee\mathcal N_2)\ldotp
\end{equation}
The equalities~\eqref{summary2} and~\eqref{summary3} imply that $\Nil(\mathcal Z_1)=\Nil(\mathcal Z_2)$.

It remains to consider the case when $\mathcal N_2$ does not satisfy the identity~\eqref{xxy=0}. Recall that $\mathcal{Y=M\vee N}_2$ where $\mathcal M$ is one of the varieties $\mathcal T$ or $\mathcal{SL}$ in this case. The equality~\eqref{Z_1} implies that
$$\mathcal Z_1=\mathcal{(N\wedge N}_1)\vee\mathcal{Y=(N\wedge N}_1)\vee\mathcal N_2\vee\mathcal M$$
where $\mathcal M$ has the just mentioned sense. Lemma~\ref{dir prod} implies that the equality~\eqref{summary1} holds. This equality and Proposition~\ref{neutr} show that the equality~\eqref{summary2} is true. Besides that, the equality~\eqref{Nil(Z_2)} holds. Suppose that $\mathcal{M=SL}$. Proposition~\ref{neutr} shows that
\begin{align*}
\mathcal{(N\vee Y)\wedge(X\vee Y)}&=(\mathcal{N\vee N}_2\vee\mathcal{SL)\wedge(X\vee N}_2\vee\mathcal{SL})\\
&=\bigl((\mathcal{N\vee N}_2)\wedge(\mathcal{X\vee N}_2)\bigr)\vee\mathcal{SL}\ldotp
\end{align*}
Now we may apply Lemma~\ref{dir prod} and conclude that
$$\Nil(\mathcal Z_2)=\Nil\bigl((\mathcal{N\vee N}_2)\wedge(\mathcal{X\vee N}_2)\bigr)\ldotp$$
Clearly, this equality holds whenever $\mathcal{M=T}$ too. Thus it is valid always. Note that
$$\mathcal{X\vee N}_2=\mathcal G_1\vee\mathcal C_{m_1}\vee\mathcal N_1\vee\mathcal N_2\ldotp$$
Using Lemma~\ref{Nil(G+C_m+U)}a) with $\mathcal{U=N}_1\vee\mathcal N_2$, $\mathcal{G=G}_1$ and $m=m_1$, we have
$$\Nil(\mathcal{X\vee N}_2)\subseteq\ZR(\mathcal N_1\vee\mathcal N_2)\ldotp$$
Since $\mathcal{N\vee N}_2$ is a nil-variety, we have
\begin{align*}
\Nil(\mathcal Z_2)&=\Nil\bigl((\mathcal{N\vee N}_2)\wedge(\mathcal{X\vee N}_2)\bigr)\\
&=(\mathcal{N\vee N}_2)\wedge\Nil(\mathcal{X\vee N}_2)\\
&\subseteq(\mathcal{N\vee N}_2)\wedge\ZR(\mathcal N_1\vee\mathcal N_2)\ldotp
\end{align*}
Now we may apply Lemma~\ref{eliminate ZR} with $\mathcal U_1=\mathcal N_1\vee\mathcal N_2$ and $\mathcal U_2=\mathcal{N\vee\mathcal N}_2$ and conclude that the equality~\eqref{summary3} holds. Because we prove above that the equality~\eqref{summary2} is true, we have $\Nil(\mathcal Z_1)=\Nil(\mathcal Z_2)$.

We complete the proof of Theorem~\ref{costand}.\qed

\medskip

\emph{The implication} c)\,$\longrightarrow$\,b) \emph{of Theorem}~\ref{umod&codistr}. It is evident that the variety $\mathcal{COM}$ is codistributive in \textbf{Com}. If the variety $\mathcal V$ satisfies the claim~(ii) of Theorem~\ref{umod&codistr} then Theorem~\ref{costand} applies with the conclusion that $\mathcal V$ is costandard and therefore, is codistributive in \textbf{Com}. It remains to consider the case when $\mathcal V$ satisfies the claim~(iii) of Theorem~\ref{umod&codistr}. So, let $\mathcal{V=G\vee M\vee N}$ where $\mathcal G$ is an Abelian periodic group variety, $\mathcal M$ is one of the varieties $\mathcal T$, $\mathcal{SL}$ or $\mathcal C_2$, and $\mathcal N$ is a commutative variety satisfying the identity~\eqref{xxy=0}. 

Let $\mathcal X$ and $\mathcal Y$ be arbitrary commutative semigroup varieties. It remains to verify that $\mathcal{V\wedge(X\vee Y)\subseteq(V\wedge X)\vee(V\wedge Y)}$ because the converse inclusion is evident. If at least one of the varieties $\mathcal X$ or $\mathcal Y$ coincides with the variety $\mathcal{COM}$ then the desirable inclusion is evident. Thus we may assume that the varieties $\mathcal X$ and $\mathcal Y$ are periodic. Put $\mathcal Z_1=\mathcal{V\wedge(X\vee Y)}$ and $\mathcal Z_2=\mathcal{(V\wedge X)\vee(V\wedge Y)}$. The varieties $\mathcal Z_1$ and $\mathcal Z_2$ are periodic. In view of Lemma~\ref{decomposition}, $\mathcal Z_1=\mathcal G_1\vee\mathcal C_{m_1}\vee\mathcal N_1$ and $\mathcal Z_2=\mathcal G_2\vee\mathcal C_{m_2}\vee\mathcal N_2$ for some Abelian periodic group varieties $\mathcal G_1$ and $\mathcal G_2$, some $m_1,m_2\ge0$ and some nil-varieties $\mathcal N_1$ and $\mathcal N_2$. We may assume without loss of generality that $\mathcal G_i=\Gr(\mathcal Z_i)$ and $\mathcal N_i=\Nil(\mathcal Z_i)$ for $i=1,2$. If $m>2$ then the variety $\Nil(\mathcal C_m)$ does not satisfy the identity~\eqref{xxy=0}. Therefore $m_1,m_2\le2$.

Clearly, it suffices to verify that $\mathcal G_1=\mathcal G_2$, $m(\mathcal Z_1)=m(\mathcal Z_2)$ and $\mathcal N_1\subseteq\mathcal N_2$. Put $q=\exp\bigl(\Gr(\mathcal V)\bigr)$, $r=\exp\bigl(\Gr(\mathcal X)\bigr)$ and  $s=\exp\bigl(\Gr(\mathcal Y)\bigr)$. Then
$$\exp(\mathcal G_1)=\gcd\bigl(q,\lcm(r,s)\bigr)\enskip\text{and}\enskip\exp(\mathcal G_2)=\lcm\bigl(\gcd(q,r),\gcd(q,s)\bigr)\ldotp$$
Since the lattice of all natural numbers with the operations $\gcd$ and $\lcm$ is distributive, we have that $\exp(\mathcal G_1)=\exp(\mathcal G_2)$. This implies that $\mathcal G_1=\mathcal G_2$ because the varieties $\mathcal G_1$ and $\mathcal G_2$ are Abelian.

Put $m(\mathcal V)=k$, $m(\mathcal X)=\ell$ and $m(\mathcal Y)=m$. It is clear that
$$m(\mathcal{E\wedge F})=\min\bigl\{m(\mathcal E),m(\mathcal F)\bigr\}$$
for arbitrary periodic varieties $\mathcal E$ and $\mathcal F$. Combining this observation with Corollary~\ref{m(X+Y)}, we have that
$$m(\mathcal Z_1)=\min\bigl\{k,\max\{\ell,m\}\bigr\}\enskip\text{and}\enskip m(\mathcal Z_2)=\max\bigl\{\min\{k,\ell\},\min\{k,m\}\bigr\}\ldotp$$
This implies that $m(\mathcal Z_1)=m(\mathcal Z_2)$.

It remains to verify that $\mathcal N_1\subseteq\mathcal N_2$. It is evident that $\mathcal N_1,\mathcal N_2\subseteq\Nil(\mathcal V)$. The variety $\mathcal V$ is commutative and satisfies the identity $x^2y=x^{r+2}y$ where $r=\exp(\mathcal G)$. Lemma~\ref{splitting}(ii) implies now that $\mathcal N_1$ and $\mathcal N_2$ satisfy the identity~\eqref{xxy=0}. This means that $\mathcal N_1,\mathcal N_2\subseteq\mathcal K$. Every subvariety of the variety $\mathcal K$ is given within $\mathcal K$ either by the identity
\begin{equation}
\label{xx=0}
x^2=0
\end{equation}
or by the identity~\eqref{x_1...x_n=0} for some $n$ or by these two identities simultaneously (see Fig.~\ref{L(I)}). Thus it suffices to verify that $\deg(\mathcal Z_1)=\deg(\mathcal Z_2)$ and the identity~\eqref{xx=0} holds in the variety $\mathcal N_1$ whenever it holds in $\mathcal N_2$.

Put $\deg(\mathcal V)=k$, $\deg(\mathcal X)=\ell$ and $\deg(\mathcal Y)=m$. It is evident that
$$\mathcal{\deg(E\wedge F)=\min\bigl\{\deg(E),\deg(F)\bigr\}}$$
for arbitrary semigroup varieties $\mathcal E$ and $\mathcal F$. Combining this observation with Corollary~\ref{deg of join}, we have that
$$\deg(\mathcal Z_1)=\min\bigl\{k,\max\{\ell,m\}\bigr\}\enskip\text{and}\enskip\deg(\mathcal Z_2)=\max\bigl\{\min\{k,\ell\},\min\{k,m\}\bigr\}\ldotp$$
This implies that $\deg(\mathcal Z_1)=\deg(\mathcal Z_2)$.

Suppose now that $\mathcal N_2$ satisfies the identity~\eqref{xx=0}. Being periodic, the variety $\mathcal Z_2$ satisfies the identity $x^n=x^m$ for some natural numbers $n$ and $m$ with $m>n$. Let $n$ be the least number with such property. Then Lemma~\ref{splitting}(ii) implies that the variety $\mathcal N_2=\Nil(\mathcal Z_2)$ satisfies the identity $x^n=0$ and $n$ is the least number with such a property. Hence $n\le2$. Thus the variety $\mathcal Z_2=\mathcal{(V\wedge X)\vee(V\wedge Y)}$ satisfies the identity $x^2=x^m$ for some $m>2$. In particular, this identity holds in the variety $\mathcal{V\vee X}$. Therefore there exists a deduction of this identity from the identities of the varieties $\mathcal V$ and $\mathcal X$. In particular, one of these varieties satisfies a non-trivial identity of the form $x^2=w$. Now Lemma~\ref{splitting} implies that one of the varieties $\Nil(\mathcal V)$ or $\Nil(\mathcal X)$ satisfies the identity~\eqref{xx=0}. If this identity holds in $\Nil(\mathcal V)$ then it holds in the variety $\mathcal{\Nil\bigl(V\wedge(X\vee Y)\bigr)=N}_1$ too. Thus we may assume that the identity~\eqref{xx=0} is satisfied by the variety $\Nil(\mathcal X)$. Analogously, using a deduction of the identity $x^2=x^m$ from the identities of the varieties $\mathcal V$ and $\mathcal Y$, we may reduce our considerations to the case when the identity~\eqref{xx=0} holds in $\Nil(\mathcal Y)$. The same arguments as we use at the beginning of this paragraph allows us to check that the varieties $\mathcal X$ and $\mathcal Y$ satisfy, respectively, the identities $x^2=x^{q+2}$ and $x^2=x^{r+2}$ for some natural numbers $q$ and $r$. Therefore $\mathcal{X\vee Y}$ satisfies the identity $x^2=x^{qr+2}$. Then Lemma~\ref{splitting}(ii) implies that the variety $\Nil(\mathcal{X\vee Y})$ satisfies the identity~\eqref{xx=0}. Then it holds in the variety $\mathcal{\Nil\bigl(V\wedge(X\vee Y)\bigr)=N}_1$ too.

We complete the proof of Theorem~\ref{umod&codistr}.\qed

\section{Corollaries}
\label{corollary}

One can give several corollaries of main results. Theorem~\ref{umod&codistr} and~\cite[Theorem~1.2]{Vernikov-08a} imply

\begin{corollary}
\label{Com and SEM}
A commutative semigroup variety $\mathcal V$ with $\mathcal{V\ne COM}$ is an upper-modular element of the lattice $\mathbf{Com}$ if and only if it is an upper-modular element of the lattice $\mathbf{SEM}$.\qed
\end{corollary}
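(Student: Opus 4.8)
The plan is to derive the statement immediately from Theorem~\ref{umod&codistr} and from the description of commutative upper-modular elements of the lattice $\mathbf{SEM}$ obtained in~\cite[Theorem~1.2]{Vernikov-08a}; no new computation is involved.

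For the sufficiency of being upper-modular in $\mathbf{SEM}$ I would use only the fact that $\mathbf{Com}=L(\mathcal{COM})$ is a principal ideal, and hence a sublattice, of $\mathbf{SEM}$: the meet of two commutative varieties is their intersection, and the join of two subvarieties of $\mathcal{COM}$ is again a subvariety of $\mathcal{COM}$. Consequently, for $\mathcal V\in\mathbf{Com}$ and arbitrary $\mathcal Y,\mathcal Z\in\mathbf{Com}$ with $\mathcal Y\subseteq\mathcal V$, the joins and meets entering the upper-modular law are computed identically in $\mathbf{Com}$ and in $\mathbf{SEM}$. Hence, restricting the universal quantifiers over $\mathcal Y$ and $\mathcal Z$ from $\mathbf{SEM}$ to the smaller family $\mathbf{Com}$, one sees at once that an upper-modular element of $\mathbf{SEM}$ which happens to be commutative is also an upper-modular element of $\mathbf{Com}$. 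This implication does not even use the hypothesis $\mathcal{V\ne COM}$.

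For the converse, let $\mathcal V$ be an upper-modular element of $\mathbf{Com}$ with $\mathcal{V\ne COM}$. By the equivalence of claims~a) and~c) of Theorem~\ref{umod&codistr}, and since claim~(i) fails here, $\mathcal V$ satisfies claim~(ii) or claim~(iii). On the other hand, \cite[Theorem~1.2]{Vernikov-08a} guarantees that every commutative variety of one of these two forms is an upper-modular element of $\mathbf{SEM}$. Therefore $\mathcal V$ is an upper-modular element of $\mathbf{SEM}$, which completes the argument.

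I do not expect any genuine obstacle; the only point deserving an explicit remark is that $\mathbf{Com}$ is a sublattice of $\mathbf{SEM}$, so that the property of being an upper-modular element passes downward from $\mathbf{SEM}$ to $\mathbf{Com}$. The exclusion of the case $\mathcal{V=COM}$ is necessary because $\mathcal{COM}$, being the greatest element of $\mathbf{Com}$, is trivially an upper-modular element of that lattice, whereas it need not be an upper-modular element of $\mathbf{SEM}$.
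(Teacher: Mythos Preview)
Your argument is correct and follows the same route the paper indicates: the corollary is obtained by comparing the characterization in Theorem~\ref{umod&codistr} with that of~\cite[Theorem~1.2]{Vernikov-08a}, which give identical lists of varieties once $\mathcal{V=COM}$ is excluded. Your additional sublattice remark for the implication $\mathbf{SEM}\Rightarrow\mathbf{Com}$ is a harmless elaboration that the paper leaves implicit.
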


Comparing Theorems~\ref{umod&codistr} and~\ref{costand}, we have

\begin{corollary}
\label{umod&codistr&costand}
For a commutative nil-variety of semigroups $\mathcal V$, the following are equivalent:
\begin{itemize}
\item[\textup{a)}]$\mathcal V$ is an upper-modular element of the lattice $\mathbf{Com}$;
\item[\textup{b)}]$\mathcal V$ is a codistributive element of the lattice $\mathbf{Com}$;
\item[\textup{c)}]$\mathcal V$ is a costandard element of the lattice $\mathbf{Com}$;
\item[\textup{d)}]$\mathcal V$ satisfies the identities~\eqref{xxyz=0} and~\eqref{xxy=xyy}.\qed
\end{itemize}
\end{corollary}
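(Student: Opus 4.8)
The plan is to deduce the corollary directly from Theorems~\ref{umod&codistr} and~\ref{costand}, by observing how their common classification condition~c) simplifies when $\mathcal V$ is required to be a nil-variety. The crucial elementary point is that a nil-variety, together with every one of its subvarieties, contains no nontrivial group, does not contain $\mathcal{SL}$ or $\mathcal C_2$, and is distinct from $\mathcal{COM}$: a nilsemigroup has neither a nontrivial idempotent nor a nontrivial subgroup, whereas $\mathcal{SL}$, $\mathcal C_2$ and the nontrivial periodic group varieties are generated by semigroups possessing such elements, and a subvariety of a nil-variety is again a nil-variety.

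First I would examine condition~c) of Theorem~\ref{umod&codistr} under the hypothesis that $\mathcal V$ is a nil-variety. Claim~(i) cannot occur. In claim~(ii) the factor $\mathcal M$ is forced to be $\mathcal T$, so $\mathcal{V=N}$ satisfies~\eqref{xxyz=0} and~\eqref{xxy=xyy}; that is, condition~d) holds. In claim~(iii) both $\mathcal G$ and $\mathcal M$ are forced to be trivial, so $\mathcal{V=N}$ satisfies~\eqref{xxy=0}; since $x^2y=0$ implies $x^2yz=0$ (multiply by $z$) and makes~\eqref{xxy=xyy} hold trivially, condition~d) holds in this case too. Conversely, if $\mathcal V$ satisfies~\eqref{xxyz=0} and~\eqref{xxy=xyy}, then it satisfies claim~(ii) with $\mathcal{M=T}$. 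Hence, for a nil-variety, condition~c) of Theorem~\ref{umod&codistr} is equivalent to condition~d) of the corollary; the very same argument shows that ``claim~(i) or claim~(ii) of Theorem~\ref{umod&codistr}'' --- which is precisely condition~c) of Theorem~\ref{costand} --- is also equivalent to~d).

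Finally I would assemble the equivalences: by Theorem~\ref{umod&codistr}, a)\,$\Leftrightarrow$\,b)\,$\Leftrightarrow$\,(condition~c) of that theorem)\,$\Leftrightarrow$\,d); by Theorem~\ref{costand}, c)\,$\Leftrightarrow$\,(condition~c) of that theorem)\,$\Leftrightarrow$\,d). Thus a), b), c) and d) are pairwise equivalent. I do not expect any real difficulty here; the only point deserving attention is the routine but essential verification that the nil-hypothesis genuinely excludes the non-nil ingredients $\mathcal{SL}$, $\mathcal C_2$, the nontrivial periodic group varieties and $\mathcal{COM}$ appearing in parts~(i)--(iii) of Theorem~\ref{umod&codistr}, so that those parts collapse to condition~d).
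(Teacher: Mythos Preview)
Your proposal is correct and follows exactly the approach the paper itself indicates: the corollary is obtained simply by comparing Theorems~\ref{umod&codistr} and~\ref{costand} and observing that, for a nil-variety, the alternatives in condition~c) collapse to the identities~\eqref{xxyz=0} and~\eqref{xxy=xyy}. Your write-up merely spells out in detail what the paper leaves implicit.
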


Theorem~\ref{umod&codistr} implies

\begin{corollary}
\label{umod is hered}
If a commutative semigroup variety $\mathcal V$ is an upper-modular element of the lattice $\mathbf{Com}$ and $\mathcal{V\ne COM}$ then every subvariety of the variety $\mathcal V$ is an upper-modular element of the lattice $\mathbf{Com}$.\qed
\end{corollary}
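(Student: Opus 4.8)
The plan is to derive this corollary directly from Theorem~\ref{umod&codistr}. That theorem describes the upper-modular elements of $\mathbf{Com}$ distinct from $\mathcal{COM}$ by the conditions~(ii) and~(iii) of its part~(c), so it suffices to check that each of these two conditions is inherited by subvarieties: if $\mathcal V$ satisfies~(ii) [respectively~(iii)] and $\mathcal{W\subseteq V}$ then $\mathcal W$ satisfies~(ii) [respectively~(iii)] as well. Note that $\mathcal{W\ne COM}$ automatically, since $\mathcal{V\ne COM}$. Granting this inheritance, Theorem~\ref{umod&codistr} immediately gives that $\mathcal W$ is an upper-modular element of $\mathbf{Com}$.

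I would first treat the case when $\mathcal V$ satisfies~(ii), so $\mathcal{V=M\vee N}$ with $\mathcal M\in\{\mathcal T,\mathcal{SL}\}$ and $\mathcal N$ satisfying~\eqref{xxyz=0} and~\eqref{xxy=xyy}. Substituting $y=z=x$ in~\eqref{xxyz=0} gives $x^4=0$, so $\mathcal N$ is a nil-variety; in particular $\mathcal{N\nsupseteq SL}$. If $\mathcal{M=T}$, then $\mathcal{W\subseteq N}$ inherits~\eqref{xxyz=0} and~\eqref{xxy=xyy}, and~(ii) holds for $\mathcal W$ with the trivial variety in the role of $\mathcal M$. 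If $\mathcal{M=SL}$, I would apply Lemma~\ref{dir prod}, which gives $L(\mathcal V)\cong L(\mathcal N)\times L(\mathcal{SL})$; since $L(\mathcal{SL})=\{\mathcal T,\mathcal{SL}\}$, it follows that $\mathcal{W=N'\vee M'}$ for some $\mathcal{N'\subseteq N}$ and some $\mathcal M'\in\{\mathcal T,\mathcal{SL}\}$, and as $\mathcal N'$ inherits~\eqref{xxyz=0} and~\eqref{xxy=xyy}, condition~(ii) holds for $\mathcal W$.

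Next I would handle the case when $\mathcal V$ satisfies~(iii), so $\mathcal{V=G\vee M\vee N}$ with $\mathcal G$ an Abelian periodic group variety, $\mathcal M\in\{\mathcal T,\mathcal{SL},\mathcal C_2\}$ and $\mathcal N$ satisfying~\eqref{xxy=0} (whence $\mathcal N$ is again a nil-variety, as $x^3=0$ in it). Using Lemma~\ref{decomposition} I would write $\mathcal W=\Gr(\mathcal W)\vee\mathcal C_{m(\mathcal W)}\vee\Nil(\mathcal W)$ and verify the three ``coordinates'' in turn. Since $\mathcal{M\vee N}$ is combinatorial, Lemma~\ref{Gr of join} yields $\Gr(\mathcal V)=\mathcal G$, whence $\Gr(\mathcal W)\subseteq\mathcal G$ is an Abelian periodic group variety. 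Writing $\mathcal M=\mathcal C_{m_0}$ with $m_0\le2$, Lemma~\ref{m(G+C_m+N)} gives $m(\mathcal V)=m_0\le2$, so $m(\mathcal W)\le2$ and $\mathcal C_{m(\mathcal W)}\in\{\mathcal T,\mathcal{SL},\mathcal C_2\}$. Finally, with $r=\exp(\mathcal G)$, one checks on $\mathcal G$ (where $x^{r+1}=x$), on $\mathcal C_{m_0}$ (where $x^2=x^3=\cdots$ because $m_0\le2$) and on $\mathcal N$ (where $x^2y=0$) that $\mathcal V$ satisfies the identity $x^2y=x^{r+2}y$; since $x^2y\vartriangleleft x^{r+2}y$, Lemma~\ref{splitting}(ii) shows that $\Nil(\mathcal V)$, and hence $\Nil(\mathcal W)\subseteq\Nil(\mathcal V)$, satisfies~\eqref{xxy=0}. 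Thus condition~(iii) holds for $\mathcal W$.

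I do not expect a serious obstacle here: the argument is essentially bookkeeping on the decomposition $\mathcal W=\Gr(\mathcal W)\vee\mathcal C_{m(\mathcal W)}\vee\Nil(\mathcal W)$ supplied by Lemma~\ref{decomposition}. The only points that need a little care are observing that the variety $\mathcal N$ occurring in~(ii) and~(iii) is a nil-variety (so that $\mathcal{N\nsupseteq SL}$ and Lemma~\ref{dir prod} applies in case~(ii)) and verifying that $\Nil(\mathcal V)$ still satisfies~\eqref{xxy=0} in case~(iii), for which the absorption $x^2y\vartriangleleft x^{r+2}y$ combined with Lemma~\ref{splitting}(ii) is exactly what is needed.
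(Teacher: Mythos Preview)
Your argument is correct and is precisely the verification one would supply to justify the paper's one-line assertion that this corollary follows from Theorem~\ref{umod&codistr}; the paper gives no details beyond that. Your approach---checking that conditions~(ii) and~(iii) of Theorem~\ref{umod&codistr}(c) pass to subvarieties via Lemmas~\ref{dir prod}, \ref{decomposition}, \ref{Gr of join}, \ref{m(G+C_m+N)} and the identity $x^2y=x^{r+2}y$ together with Lemma~\ref{splitting}(ii) (an argument that in fact already appears in the paper's proof of the implication c)\,$\longrightarrow$\,b) of Theorem~\ref{umod&codistr})---is essentially the intended one.
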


Note that the analog of this assertion for the lattice \textbf{SEM} is the case (see~\cite[Corollary~3]{Vernikov-08b}). Theorem~\ref{umod&codistr} and results of the article~\cite{Volkov-91} imply

\begin{corollary}
\label{umod implies distr lat}
If a commutative semigroup variety $\mathcal V$ is an upper-modular element of the lattice $\mathbf{Com}$ and $\mathcal{V\ne COM}$ then the lattice $L(\mathcal V)$ is distributive.\qed
\end{corollary}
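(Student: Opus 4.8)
The plan is to combine the description of upper-modular elements supplied by Theorem~\ref{umod&codistr} with the classification of commutative semigroup varieties possessing a distributive subvariety lattice obtained in~\cite{Volkov-91}. Since $\mathcal V$ is an upper-modular element of \textbf{Com} distinct from $\mathcal{COM}$, Theorem~\ref{umod&codistr} tells us that one of the claims~(ii) or~(iii) of that theorem holds for $\mathcal V$.

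First I would reduce away a possible semilattice summand. If the variety $\mathcal M$ occurring in claim~(ii) or~(iii) equals $\mathcal{SL}$, delete this summand and call the result $\mathcal V'$; thus $\mathcal V'$ is a commutative nil-variety or the join of an Abelian periodic group variety with a commutative nil-variety, so $\mathcal{V'\nsupseteq SL}$ (Lemma~\ref{m(G+C_m+N)}) and $\mathcal{V=V'\vee SL}$, whence by Lemma~\ref{dir prod} $L(\mathcal V)\cong L(\mathcal V')\times L(\mathcal{SL})$. As $L(\mathcal{SL})$ is a two-element chain and a direct product of distributive lattices is distributive, $L(\mathcal V)$ is distributive if and only if $L(\mathcal V')$ is; if $\mathcal M$ was $\mathcal T$ or $\mathcal C_2$ already, put $\mathcal V'=\mathcal V$. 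We are thus reduced to two cases: \textup{(a)} $\mathcal V'$ is a commutative nil-variety satisfying the identities~\eqref{xxyz=0} and~\eqref{xxy=xyy}; \textup{(b)} $\mathcal{V'=G\vee N}$ or $\mathcal{V'=G\vee C}_2\vee\mathcal N$ where $\mathcal G$ is an Abelian periodic group variety and $\mathcal N$ is a commutative variety satisfying~\eqref{xxy=0}.

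In case~(a) the variety $\mathcal V'$ is a subvariety of the variety $\mathcal I$ introduced in Section~\ref{proof}, so $L(\mathcal V')$ is a principal ideal of $L(\mathcal I)$; since Fig.~\ref{L(I)} presents $L(\mathcal I)$ as a distributive lattice and every ideal of a distributive lattice is distributive, this case is settled. In case~(b) one has $\mathcal N\subseteq\mathcal K=\var\{x^2y=0,\,xy=yx\}$, whose subvariety lattice is distributive by Fig.~\ref{L(I)}; furthermore the subvariety lattice of an Abelian periodic group variety is the (distributive) divisor lattice of its exponent, and $L(\mathcal C_2)$ is a three-element chain. The remaining task is to conclude that the join of these pieces still has a distributive subvariety lattice, and this is exactly what follows from the criterion of~\cite{Volkov-91} once one verifies that a variety of the form appearing in Theorem~\ref{umod&codistr}(c) belongs to the class singled out there.

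I expect the main obstacle to be case~(b): one must exclude any copy of the pentagon $N_5$ or the diamond $M_3$ inside $L(\mathcal V')$ arising from the interaction of the group part $\mathcal G$, the monoid part $\mathcal C_2$ and the nil part $\mathcal N$. The purely nil and purely group situations are routine, so the heart of the matter is checking that the varieties listed in Theorem~\ref{umod&codistr}(c) lie in the family of commutative semigroup varieties with distributive subvariety lattice described in~\cite{Volkov-91}; once that is done, the corollary is immediate.
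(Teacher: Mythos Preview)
Your proposal is correct and follows exactly the route the paper takes: the paper's entire proof is the one-line citation ``Theorem~\ref{umod&codistr} and results of the article~\cite{Volkov-91} imply'', and your plan is precisely to feed the structural description from Theorem~\ref{umod&codistr} into Volkov's distributivity criterion. Your splitting off of the $\mathcal{SL}$ summand and the direct appeal to Fig.~\ref{L(I)} in case~(a) are a bit of extra work the paper does not bother to spell out---Volkov's criterion already covers all the varieties in Theorem~\ref{umod&codistr}(c) uniformly---but this does not change the substance of the argument.
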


We does not know, whether the analog of this fact in the lattice \textbf{SEM} is true. It is verified in~\cite[Corollary~2]{Vernikov-08b} that the following weaker statement is the case: if a variety $\mathcal V$ is an upper-modular element of the lattice \textbf{SEM} and $\mathcal V$ is not the variety of all semigroups then the lattice $L(\mathcal V)$ is modular.

\small


\begin{thebibliography}{99}
\bibitem{Gratzer-11}
G.\,Gr\"atzer, Lattice Theory: Foundation, Birkh\"auser, Springer Basel AG, 2011.
\bibitem{Head-68}
T.\,J.\,Head, \emph{The varieties of commutative monoids}, Niew Arch.\ Wiskunde, 16 (1968), 203--206.
\bibitem{Shaprynskii-11}
V.\,Yu.\,Shaprynski\v{\i}, \emph{Distributive and neutral elements of the lattice of commutative semigroup varieties}, Izv.\ VUZ. Matem., No.\,7 (2011), 67--79 [Russian; Engl.\ translation: Russ.\ Math. Izv.\ VUZ, 55, No.\,7 (2011), 56--67].
\bibitem{Shaprynskii-12}
V.\,Yu.\,Shaprynski\v{\i}, \emph{Modular and lower-modular elements of lattices of semigroup varieties}, Semigroup Forum, 85 (2012), 97--110.
\bibitem{Tishchenko-90}
A.\,V.\,Tishchenko, \emph{A remark on semigroup varieties of finite index}, Izv.\ VUZ. Matem., No.\,7 (1990), 79--83 [Russian; Engl.\ translation: Soviet Math. Izv.\ VUZ, 34, No.\,7 (1990), 92--96.]
\bibitem{Vernikov-08a}
B.\,M.\,Vernikov, \emph{Upper-modular elements of the lattice of semigroup varieties}, Algebra Universalis, 59 (2008), 405--428.
\bibitem{Vernikov-08b}
B.\,M.\,Vernikov, \emph{Upper-modular elements of the lattice of semigroup varieties}. II, Fund.\ and Appl.\ Math., 14, No.\,7 (2008), 43--51 [Russian; Engl.\ translation: J. Math.\ Sci., 164 (2010), 182--187].
\bibitem{Vernikov-11}
B.\,M.\,Vernikov, \emph{Codistributive elements of the lattice of semigroup varieties}, Izv.\ VUZ. Matem., No.\,7 (2011), 13--21 [Russian; Engl.\ translation: Russ.\ Math. Izv.\ VUZ, 55, No.\,7 (2011), 9--16].
\bibitem{Vernikov-15+}
B.\,M.\,Vernikov, \emph{Special elements in lattices of semigroup varieties}, Acta Sci.\ Math.\ (Szeged), accepted; available at http://arxiv.org/abs/1309.0228.
\bibitem{Vernikov-Volkov-00}
B.\,M.\,Vernikov and M.\,V.\,Volkov, \emph{Commuting fully invariant congruences on free semigroups}, Contrib.\ General Algebra, 12 (2000), 391--417.
\bibitem{Vernikov-Volkov-06}
B.\,M.\,Vernikov and M.\,V.\,Volkov, \emph{Modular elements of the lattice of semigroup varieties}. II, Contrib.\ General Algebra, 17 (2006), 173--190.
\bibitem{Volkov-89}
M.\,V.\,Volkov, \emph{Semigroup varieties with modular subvariety lattices}, Izv.\ VUZ. Matem., No.\,6 (1989), 51--60 [Russian; Engl.\ translation: Soviet Math. Izv.\ VUZ, 33, No.\,6 (1989), 48--58].
\bibitem{Volkov-91}
M.\,V.\,Volkov, \emph{Commutative semigroup varieties with distributive subvariety lattices}, Contrib.\ General Algebra, 7 (1991), 351--359.
\bibitem{Volkov-05}
M.\,V.\,Volkov, \emph{Modular elements of the lattice of semigroup varieties}, Contrib.\ General Algebra, 16 (2005), 275--288.
\end{thebibliography}
\end{document}